\DeclareMathOperator*{\es}{\textup{ess sup}}
\journal{ Bulletin des Sciences Mathématiques}
\begin{document}

\newtheorem{theorem}{Theorem}[section]
\newtheorem{lemma}{Lemma}[section]
\newtheorem{remark}{Remark}[section]
\newtheorem{pro}{Proposition}[section]
\newtheorem{example}{Example}[section]
\newdefinition{definition}{Definition}[section]
\newproof{proof}{Proof}
\renewcommand{\theequation}{\thesection.\arabic{equation}}
\newtheorem{thm}{Theorem}[section]
 \newtheorem{cor}[thm]{Corollary}
 \newtheorem{lem}[thm]{Lemma}
 \newtheorem{prop}[thm]{Proposition}
 \newtheorem{defn}[thm]{Definition}
 \newtheorem{rem}[thm]{Remark}
 \newtheorem{ex}{Example}
 \numberwithin{equation}{section}

\begin{frontmatter}

\title{Random average sampling over  quasi shift-invariant spaces on LCA groups}
%\titlerunning{Random average sampling}
\author[Ankush]{Ankush Kumar Garg}
%\ead{ankush16@iisertvm.ac.in}
\author[Arati]{S. Arati}
%\ead{aratishashi@iisertvm.ac.in}
\author[Deva]{P. Devaraj}%\corref{*}}
\ead{devarajp@iisertvm.ac.in}

\cortext[Deva]{Corresponding Author}
\address[]{School of Mathematics, Indian Institute of Science Education and Research Thiruvananthapuram, Vithura, Thiruvananthapuram-695551.}
%Ankush,Arati,Deva

%\date{Received: date / Accepted: date}
%\maketitle

\begin{abstract}
  The problem of random average sampling and reconstruction over  multiply generated local quasi shift-invariant subspaces of  mixed Lebesgue spaces in the setting of locally compact abelian groups is considered. The sampling inequalities as well as the reconstruction formulae are shown to hold with very high probability if the number of samples is taken to be sufficiently large.
\end{abstract}
%\subjclass[2010]{46E30; 94A20; 94A12
\begin{keyword}
locally compact abelian groups; mixed Lebesgue spaces; probabilistic reconstruction; quasi shift-invariant subspaces; random average sampling; sampling inequality.

 \noindent {\bf AMS Subject Classifications:}46E30\sep 94A20 \sep 94A12
\end{keyword}

\end{frontmatter}
\section{Introduction}
One of the most dynamically evolving research areas, with both practical and theoretical interests, is that of sampling and reconstruction.
The sampling theorem of Whittaker-Kotel'nikov-Shannon, which is one of the most remarkable results in signal analysis, gives a reconstruction formula for band-limited functions on $\mathbb{R}.$ As the band-limited functions have holomorphic extensions to the whole of $\mathbb{C}$, they are of infinite duration and hence for practical applications, several generalizations of this theorem have been studied in various contexts. An extensive survey of this theory can be seen in \cite{ButzerStens}. Moreover, the measurement of the  exact values of the samples, which depends on the aperture device used, may not be feasible. A more realizable model would be that involving the average samples. Further, the deterministic approach for irregular sampling of functions involving several variables seems to be hard, which led to the consideration of a probabilistic one, namely the random sampling.
\par
Random sampling of multivariate trigonometric polynomials as well as band-limited functions were analysed by Bass and Gröchenig in \cite{bass-grochenig1, bass-grochenig2, bass-grochenig3}.  The problem of random sampling for a shift invariant subspace of $L^p(\mathbb{R}^d)$ generated by a continuous compactly supported function was studied in \cite{yangwei}. A similar problem for functions with bounded derivatives on $(0,1)^d$ was looked into in \cite{YangTao} and that for reproducing kernel subspaces was analysed in \cite{patelsampath, LiSunXian}. Sampling inequalities and reconstruction for certain subsets of multiply generated shift invariant subspaces of Lebesgue spaces and mixed Lebesgue spaces were obtained in \cite{furr, yang} and \cite{jiangli3} respectively. Reconstruction from random average samples of functions in local shift invariant spaces generated by finitely many compactly supported continuous functions on $\mathbb{R}^d$ was studied in \cite{LiWenXian2}. Some other works involving the deterministic average sampling include \cite{sun5, sun6} for band-limited functions, \cite{sun4, AldroubiSunTang, kan, deva1} for various shift-invariant subspaces, \cite{NashedSunXian} for reproducing kernel subspaces and \cite{ sun3, sun2} for spline spaces.

The study of sampling in the group setting has been looked into extensively over the last few decades. For instance, Pensenson, in \cite{Pesenson}, provided a reconstruction formula for spectral entire functions on stratified Lie groups using their samples on discrete subgroups. The error analysis for several iterative methods of reconstruction for band-limited functions on locally compact abelian groups was done in \cite{FeichPandey}. In \cite{FuhrGrochenig}, sampling inequality for closed subspaces, comprising of continuous functions,  of the space of square-integrable functions on locally compact groups with a reproducing kernel was derived using oscillation estimates.  Also, the  sampling inequality for Paley-Wiener spaces on stratified Lie groups was  obtained. Šikić and Wilson \cite{sikic} considered cyclic lattice invariant closed subspaces in the locally compact abelian group setting and provided a reconstruction formula using a periodization condition on the generator. In \cite{Agora}, using a multi-tiling condition, the existence of sampling as well as interpolation sets near the critical density in the case of band-limited functions on LCA groups was proved. In 2017,  Garc\'{i}a et al.  \cite{GarMedVil} studied the theory of sampling for unitary invariant subspaces in the context of LCA groups.

 In this paper, we analyse the problem of random average sampling and reconstruction for signals in  local quasi shift-invariant subspaces of mixed Lebesgue spaces over  locally compact abelian(LCA) groups of  the form $G_{1} \times G_{2},$ where  $G_{1} $ and $ G_{2}$ are LCA groups. The theory of mixed Lebesgue spaces has been elaborated by Benedek and Panzone in \cite{benzone}. The concept of quasi shift-invariance has been considered for analysing sampling and interpolation properties in \cite{Atreas, GroSto, Hamm}. We organize the paper as follows. In Section 2, we discuss the setting of the problem, the assumptions involved and some preliminary lemmas that will be used in the sequel. In Section 3, we prove the sampling inequalities for certain subsets of a local quasi shift-invariant space and in the last section, we  provide reconstruction formulae for the local quasi shift-invariant space under certain conditions.

\section{A  local quasi shift-invariant space } \label{sec2}
\noindent
Let $G_{1}$ and $G_{2}$ be locally compact abelian groups. For $1<p,q<\infty,$ let $L^{p,q}(G_{1} \times G_{2})$ denote the collection of all measurable functions $f:G_{1}\times G_{2}\rightarrow \mathbb{C}$ such that
$$\|f\|_{L^{p,q}(G_{1} \times G_{2})} := \left(\int_{G_{1}}\left(\int_{G_{2}}|f(u,v)|^{q}dv\right)^{\frac{p}{q}}du\right)^{\frac{1}{p}} < \infty,$$
where $du$ and $dv$ denote the Haar measures of $G_{1}$ and $G_{2}$ respectively. The set of all complex valued measurable functions which are essentially bounded is denoted by $L^{\infty}(G_{1} \times G_{2})$ and
\begin{equation*}
\|f\|_{L^{\infty}(G_{1}\times G_{2})}:=  \es_{(u,v)\in (G_{1}\times G_{2})}   |f(u,v)|.
\end{equation*}

 A sequence $\{x_{i}\}$ in a LCA group $G$ is called $U$-separated if there exists a neighbourhood $U$ of the identity element in $G$ such that $(x_{i} +U) \bigcap (x_{j} + U)= \emptyset$ for $i \neq j.$ For a countable index set $I$, let $X=\{x_{s}\}_{s\in I} \subset G_{1}$ and $Y=\{y_{t}\}_{t \in I} \subset G_{2}$ be $U_{1}$-separated and $U_{2}$-separated sequences in $G_{1}$ and $G_{2}$ respectively. Let $\ell^{p,q}$ denote the space of all complex sequences $\{c(s, t):s,t \in I\}$ satisfying
$$\|c\|_{\ell^{p,q}} := \left(\sum_{s}\left(\sum_{t}|c(s, t)|^{q}\right)^{\frac{p}{q}}\right)^{\frac{1}{p}} < \infty.$$
Also, $\ell^{\infty}$ denotes the space of all complex functions $c$ such that $\|c\|_{\ell^{\infty}} := \displaystyle \sup_{s,t}|c(s, t)| < \infty.$\\
\noindent
The following are a few assumptions that we need for our analysis.
\begin{itemize}
\item[$(\text{A}_{1})$] Let $\omega$ be a function in $L^{1}(G_{1} \times G_{2})$ with  support contained in $W=W_{1}\times W_{2},$  where $ W_{1} $  and $W_{2}$ are  compact subsets of   $ G_{1}$ and  $ G_{2}$  containing the corresponding identity elements.

\item[$(\text{A}_{2})$] Let $K = K_1\times K_2$, where $K_1 \subset G_1$ and $K_2 \subset  G_2$ are compact sets with the corresponding  Haar measures of these sets  denoted by $\mu_1$ and $\mu_2.$ Also, let $\widetilde{K}:=K-W.$  Further, let $ \Phi = (\phi_{1}, \phi_{2}, \dots, \phi_{r})^{T},$ where $ \phi_{i} \in L^{p,q}(G_{1} \times G_{2}),  i=1,2, \dots, r$ are compactly supported continuous functions on $G_{1}\times G_{2},$ for which there exist $0<a_{1} \leq a_{2} < \infty$ satisfying
\begin{equation}
a_{1}\|\textbf{c}\|_{\ell^{p,q}} \leq \left\|\sum_{s,t} \textbf{c}(s, t)^{T} \Phi(\cdot-x_{s}, \cdot-y_{t})\right\|_{L^{p,q}(\widetilde{K})} \leq a_{2} \|\textbf{c}\|_{\ell^{p,q}} \label{asmptn1}
\end{equation}
for all $\textbf{c} \in \left(\ell^{p,q} \right)^{r}$  such that  $\textbf{c}(s,t)=0$ whenever $\Phi(\cdot-x_{s}, \cdot-y_{t})\equiv 0$ on $\widetilde{K},$ and $ \displaystyle \| \textbf{c} \|_{\ell^{p,q}} = \sum_{i=1}^{r} \| c_{i}\|_{\ell^{p,q}}.$ In other words,  $\Phi$ has stable shifts.

 \item[$(\text{A}_{3})$] Let $\displaystyle \Omega =  \bigcup_{i=1}^{r} \text{supp}(\phi_{i})$ be such that the compact set $\widetilde{K}- \Omega$ is sequentially compact.

 \item[$(\text{A}_{4})$]  Let $\rho$ be a general probability distribution function over $K$ satisfying
 \begin{equation*}
 0 < {\cal C}_{\rho, 1} \leq \rho(u, v) \leq \mathcal{C}_{\rho, 2} \ \forall \ (u,v) \in K.
 \end{equation*}
\end{itemize}
We wish to emphasize that while considering average sampling, one needs to take into account a domain for the function larger than the sampling domain in order to handle the convolution samples. This has been taken care of in our assumptions above.
\par
Consider the local quasi shift-invariant space $V_{K}(\Phi)$ in $L^{p,q}(G_{1} \times G_{2})$ defined by
\begin{eqnarray*}
&& V_{K}(\Phi):= \bigg\{f\in L^{p,q}(G_{1} \times G_{2}): f= \sum_{s,t} \textbf{c}(s,t)^{T} \Phi(\cdot-x_{s}, \cdot-y_{t}) \ \text{on } \widetilde{K}, \nonumber \\
&& \qquad \qquad \qquad \qquad \qquad \qquad  \qquad \qquad \qquad \qquad \qquad  \text{for some} \ \textbf{c} \in (\ell^{p,q})^{r}\bigg\}.
\end{eqnarray*}
\noindent
The subspace $V_{K}(\Phi)$ is a finite dimensional subspace of $L^{p,q}(\widetilde{K}).$ In fact, there will be only finitely many $(s,t)$ for which $(u - x_{s}, v-y_{t}) \in \Omega ,$ where $u,v \in \widetilde{K}.$ If not, there will be infinitely many $(s,t)$ in the sequentially compact set $\widetilde{K}-\Omega$ which leads to a contradiction to the $U_{1}$-separated and $U_{2}$-separated properties of the sequences $X$ and $Y$ respectively. Let $d$ denote the dimension of $V_{K}(\Phi)$ as a real vector space and $V_{K}^{*}(\Phi)$ denote its normalization, i.e., $V_{K}^{*}(\Phi):= \{f \in V_{K}(\Phi): \|f\|_{L^{p,q}(\widetilde{K})}=1\}.$

\begin{lem} \label{lemCphi}
For any $f \in V_{K}^{*}(\Phi), \|f\|_{L^{\infty}(\widetilde{K})} \leq \dfrac{\widetilde{C}_{\Phi}}{a_{1}},$\\
 where
  \begin{equation*}
 \widetilde{C}_{\Phi}= \sup_{(u,v) \in \widetilde{K}} \sum_{i=1}^{r} \sum_{s,t} |\phi_{i}(u-x_{s}, v-y_{t})|
\end{equation*}
and $a_{1}$ is as in assumption $(\text{A}_{2}).$
\end{lem}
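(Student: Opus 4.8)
The plan is to combine a pointwise estimate on the generator expansion with the lower stability bound in $(\text{A}_{2})$. First I would fix $f \in V_{K}^{*}(\Phi)$ and write $f = \sum_{s,t} \textbf{c}(s,t)^{T} \Phi(\cdot-x_{s}, \cdot-y_{t})$ on $\widetilde{K}$, choosing the coefficient sequence $\textbf{c} \in (\ell^{p,q})^{r}$ so that $\textbf{c}(s,t)=0$ whenever $\Phi(\cdot-x_{s}, \cdot-y_{t})\equiv 0$ on $\widetilde{K}$. This is precisely the admissible class for which $(\text{A}_{2})$ is stated, and the reduction is harmless since the discarded shifts contribute nothing on $\widetilde{K}$. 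Writing the inner product componentwise gives $f(u,v)=\sum_{i=1}^{r}\sum_{s,t} c_{i}(s,t)\,\phi_{i}(u-x_{s}, v-y_{t})$ for $(u,v)\in\widetilde{K}$.

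The next step is a pointwise bound obtained from the triangle inequality, namely
\begin{equation*}
|f(u,v)| \leq \sum_{i=1}^{r}\sum_{s,t} |c_{i}(s,t)|\,|\phi_{i}(u-x_{s}, v-y_{t})|.
\end{equation*}
I would then factor out the coefficients by means of the elementary embedding $\ell^{p,q}\hookrightarrow\ell^{\infty}$, that is, $|c_{i}(s,t)| \leq \|c_{i}\|_{\ell^{p,q}} \leq \|\textbf{c}\|_{\ell^{p,q}}$ for every $i,s,t$; the first inequality holds because a single entry is dominated by the full $\ell^{p,q}$ norm (by monotonicity of the nested $\ell^{q}$ and $\ell^{p}$ sums), and the second since $\|\textbf{c}\|_{\ell^{p,q}} = \sum_{i=1}^{r}\|c_{i}\|_{\ell^{p,q}}$. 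Substituting and taking the supremum over $(u,v)\in\widetilde{K}$ produces
\begin{equation*}
\|f\|_{L^{\infty}(\widetilde{K})} \leq \|\textbf{c}\|_{\ell^{p,q}} \sup_{(u,v)\in\widetilde{K}} \sum_{i=1}^{r}\sum_{s,t} |\phi_{i}(u-x_{s}, v-y_{t})| = \widetilde{C}_{\Phi}\,\|\textbf{c}\|_{\ell^{p,q}}.
\end{equation*}

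To close the argument I would invoke the lower bound in $(\text{A}_{2})$ together with the normalization. Since $\|f\|_{L^{p,q}(\widetilde{K})}=1$ for $f\in V_{K}^{*}(\Phi)$, the inequality $a_{1}\|\textbf{c}\|_{\ell^{p,q}} \leq \|f\|_{L^{p,q}(\widetilde{K})}$ yields $\|\textbf{c}\|_{\ell^{p,q}} \leq 1/a_{1}$, and combining this with the previous display gives $\|f\|_{L^{\infty}(\widetilde{K})} \leq \widetilde{C}_{\Phi}/a_{1}$, as claimed. I expect no serious obstacle: the argument is entirely routine once the admissible representation is fixed. The only points warranting a word of care are the justification of the embedding $\ell^{p,q}\hookrightarrow\ell^{\infty}$ noted above, and the finiteness of $\widetilde{C}_{\Phi}$. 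The latter is guaranteed by assumptions $(\text{A}_{2})$ and $(\text{A}_{3})$: on the sequentially compact set $\widetilde{K}-\Omega$ only finitely many shifts of each continuous, compactly supported $\phi_{i}$ are nonzero, so each inner sum is finite and the supremum of the resulting continuous function over the compact set $\widetilde{K}$ is attained.
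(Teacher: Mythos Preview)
Your proof is correct and follows essentially the same route as the paper: write $f$ in the admissible representation, bound pointwise by $\|\textbf{c}\|_{\ell^{p,q}}\widetilde{C}_{\Phi}$ via the embedding $\ell^{p,q}\hookrightarrow\ell^{\infty}$, and then use the lower stability bound from $(\text{A}_{2})$ together with the normalization $\|f\|_{L^{p,q}(\widetilde{K})}=1$. The only cosmetic difference is that the paper passes through $\|\textbf{c}\|_{\ell^{\infty}}$ as an intermediate step before invoking $\|\textbf{c}\|_{\ell^{\infty}}\leq\|\textbf{c}\|_{\ell^{p,q}}$, whereas you go directly; your added remark on the finiteness of $\widetilde{C}_{\Phi}$ is a welcome clarification not spelled out in the paper.
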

\begin{proof}
For $f\in V_{K}^{*}(\Phi),$ we have
\begin{equation*}
f(u,v)= \sum_{s,t} \textbf{c}(s, t)^{T} \Phi(u-x_{s}, v-y_{t}),  (u,v) \in \widetilde{K},
\end{equation*}
with the convention that  $\textbf{c}(s,t)=0$ whenever $\Phi(\cdot-x_{s}, \cdot-y_{t})\equiv 0$ on $\widetilde{K}.$
Then
 \begin{equation*}
\|f\|_{L^{\infty}(\widetilde{K})} \leq \sup_{(u,v)\in \widetilde{K} } \sum_{i=1}^{r}\sum_{s,t} |c_{i}(s, t)\phi_{i}(u-x_{s}, v-y_{t})| \leq \|\textbf{c}\|_{\ell^{\infty}} \widetilde{C}_{\Phi},
\end{equation*}
where $\displaystyle \|\textbf{c}\|_{\ell^{\infty}} = \sup_{1\leq i \leq r} \|c_{i}\|_{\ell^{\infty}}.$ As $\|\textbf{c}\|_{\ell^{\infty}} \leq \|\textbf{c}\|_{\ell^{p,q}},$ we obtain
$$\|f\|_{L^{\infty}(\widetilde{K})} \leq \|\textbf{c}\|_{\ell^{p,q}} \widetilde{C}_{\Phi} \leq \|f\|_{L^{p,q}(\widetilde{K})} \frac{\widetilde{C}_{\Phi}}{a_{1}} = \frac{\widetilde{C}_{\Phi}}{a_{1}}, \ \text{by using \eqref{asmptn1}}. $$
\hfill{$\Box$}
\end{proof}
\noindent
\begin{rem} By a similar argument, we can prove that $V_{K}(\Phi)\subset L^{\infty}(\widetilde{K}).$
\end{rem}

%Similarly, as in above equation \eqref{Cphi4}, we can define
%\begin{equation*}
% \widetilde{C}_{\Phi}= \sup_{(u,v) \in \widetilde{K}} \sum_{i=1}^{r} \sum_{n,m} |\phi_{i}(u-x_{n}, v-y_{m})|
%\end{equation*}
%and hence for any $f \in V_{K}^{*}(\Phi),$ Lemma \ref{lemCphi} gives
%\begin{equation}
% \|f\|_{L^{\infty}(\widetilde{K})} \leq \dfrac{\widetilde{C}_{\Phi}}{a_{1}}. \label{SupNormKtilde}
%\end{equation}

We shall  make use of  the following lemmas, which are certain variants of the well known Young's inequality,  related to compact subsets of  LCA groups.

\begin{lem}
Let $G$ be a LCA group and $K_{0}, W_{0}$ be two of its compact subsets.   For $1\leq p,q,r \leq \infty$ with $\frac{1}{p}+ \frac{1}{q} = \frac{1}{r}+1,$  suppose $f \in L^{p}(G)$ and $ g \in L^{q}(G)$ such that  $supp(g) \subset W_{0}.$  Then,
\begin{equation}
\|f*g\|_{L^{r}(K_{0})} \leq \|f\|_{L^{p}(\widetilde{K}_{0})} \|g\|_{L^{q}(W_{0})}, \label{young1}
\end{equation}
where $\widetilde{K}_{0}=K_{0}-W_{0}.$ \label{LemYoung1}
\end{lem}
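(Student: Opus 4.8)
The plan is to reduce this local estimate to the classical Young convolution inequality on $G$ by exploiting the compact support of $g$ to truncate $f$ to the set $\widetilde{K}_{0}$. First I would write the convolution explicitly: since $\mathrm{supp}(g)\subset W_{0}$, for every $x\in G$ we have
$$(f*g)(x)=\int_{G}f(x-y)g(y)\,dy=\int_{W_{0}}f(x-y)g(y)\,dy,$$
so that the only values of $f$ entering the integral are those at points $x-y$ with $y\in W_{0}$.

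The crucial observation is a localization of the domain: if $x\in K_{0}$ and $y\in W_{0}$, then $x-y\in K_{0}-W_{0}=\widetilde{K}_{0}$. Hence, setting $\tilde f:=f\,\chi_{\widetilde{K}_{0}}$, I claim that $(f*g)(x)=(\tilde f*g)(x)$ for all $x\in K_{0}$. Indeed, for such $x$ and for $y\in W_{0}$ one has $\chi_{\widetilde{K}_{0}}(x-y)=1$, so replacing $f$ by $\tilde f$ changes nothing on the relevant integration region, while outside $W_{0}$ the factor $g$ already vanishes. Consequently $\|f*g\|_{L^{r}(K_{0})}=\|\tilde f*g\|_{L^{r}(K_{0})}\le \|\tilde f*g\|_{L^{r}(G)}$.

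Finally I would invoke the classical Young inequality on the LCA group $G$ (valid because the Haar measure is translation invariant), which gives, for exponents satisfying $\tfrac1p+\tfrac1q=\tfrac1r+1$,
$$\|\tilde f*g\|_{L^{r}(G)}\le \|\tilde f\|_{L^{p}(G)}\,\|g\|_{L^{q}(G)}=\|f\|_{L^{p}(\widetilde{K}_{0})}\,\|g\|_{L^{q}(W_{0})},$$
where the last equality uses $\tilde f=f\,\chi_{\widetilde{K}_{0}}$ and $\mathrm{supp}(g)\subset W_{0}$. Chaining the two displays then yields \eqref{young1}.

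I do not anticipate a genuine obstacle here; the content is essentially bookkeeping. The only points requiring a little care are confirming that $\widetilde{K}_{0}=K_{0}-W_{0}$ is compact, hence measurable, as the continuous image of the compact set $K_{0}\times W_{0}$ under the subtraction map, and checking the endpoint cases $p,q,r\in\{1,\infty\}$, where both the classical Young inequality and the truncation argument still apply verbatim. The replacement of $f$ by $\tilde f$ is the one step that genuinely uses the support hypothesis on $g$ together with the definition of $\widetilde{K}_{0}$, and it is precisely what upgrades the global Young bound to the stated local one.
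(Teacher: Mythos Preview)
Your proposal is correct and takes a genuinely different route from the paper. The paper does not invoke the global Young inequality as a black box; instead it reproves the estimate from scratch by writing, for $u\in K_{0}$,
\[
|f*g|(u)\le \int_{W_{0}}\bigl(|f(u-v)|^{p}|g(v)|^{q}\bigr)^{1/r}\,|f(u-v)|^{1-p/r}\,|g(v)|^{1-q/r}\,dv,
\]
applying the three-term H\"older inequality with exponents $r,q',p'$ (where $\tfrac1{p'}+\tfrac1{q'}+\tfrac1r=1$), and then integrating over $K_{0}$ with Fubini while tracking that the $f$-integrals land in $\widetilde{K}_{0}$. In other words, the paper localizes the standard proof of Young's inequality step by step. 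Your approach is cleaner: you front-load the localization by replacing $f$ with $f\chi_{\widetilde{K}_{0}}$, verify this is harmless on $K_{0}$, and then appeal once to the classical Young inequality on $G$. The trade-off is that the paper's argument is self-contained (no external citation needed for Young on LCA groups), while yours is shorter and makes transparent that the lemma is nothing more than the global inequality plus a support observation. Both handle the endpoint cases without extra work.
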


\begin{proof}
Let $1 <p,q,p',q' < \infty $ be such that $\frac{1}{p}+ \frac{1}{p'}=1$ and $\frac{1}{q}+ \frac{1}{q'}=1$.
For $u \in K_{0},$ we have
\begin{eqnarray*}
|(f*g)(u)|  &\leq & \int_{W_{0}}|f(u-v)||g(v)|dv \\
 &=& \int_{W_{0}} \big( |f(u-v)|^{p} |g(v)|^{q}   \big) ^{\frac{1}{r}} |f(u-v)|^{(1- \frac{p}{r})}|g(v)|^{(1- \frac{q}{r})} dv. \\
 \end{eqnarray*}

 As $\frac{1}{p'}+ \frac{1}{q'} + \frac{1}{r}=1,$ we get
  \begin{eqnarray*}
  |(f*g)(u)| & \leq & \left ( \int_{W_{0}} \left|\left( |f(u-v)|^{p} |g(v)|^{q}   \right)^{\frac{1}{r}}\right| ^{r}dv \right )^{\frac{1}{r}} \left( \int_{W_{0}} \left ( |f(u-v)|^{(1- \frac{p}{r})}\right )^{q'}dv \right)^{\frac{1}{q'}} \\
  && \quad \quad \quad \quad  \times \left( \int_{W_{0}}\left(|g(v)|^{(1- \frac{q}{r})} \right )^{p'} dv \right )^{\frac{1}{p'}} ,\\
 \end{eqnarray*}
 using  Hölder's inequality.
 Further, as $q= p'(1- \frac{q}{r})$ and $p= q'(1- \frac{p}{r}),$ it follows that

 \begin{eqnarray*}
 &&|(f*g)(u)| \\
 &&  \leq \left( \int_{W_{0}}  |f(u-v)|^{p} |g(v)|^{q}   dv \right )^{\frac{1}{r}} \left( \int_{W_{0}} |f(u-v)|^{p}dv \right)^{\frac{1}{q'}}  \left( \int_{W_{0}} |g(v)|^{q}dv \right)^{\frac{1}{p'}}\\
 && \leq \left( \int_{W_{0}}  |f(u-v)|^{p} |g(v)|^{q}   dv \right )^{\frac{1}{r}}  \|f\|^{\frac{p}{q'}}_{L^{p}(\widetilde{K}_{0})} \|g\|^{\frac{q}{p'}}_{L^{q}(W_{0})}\\
 &&  \leq  \left( \int_{W_{0}}  |f(u-v)|^{p} |g(v)|^{q}   dv \right )^{\frac{1}{r}}  \|f\|^{(1-\frac{p}{r})}_{L^{p}(\widetilde{K}_{0})} \|g\|^{(1-\frac{q}{r})}_{L^{q}(W_{0})}.
\end{eqnarray*}
Hence,
 \begin{eqnarray*}
 \int_{K_{0}}|(f*g)(u)|^{r}du &\leq & \|f\|^{r-p}_{L^{p}(\widetilde{K}_{0})} \|g\|^{r-q}_{L^{q}(W_{0})} \int_{W_{0}} |g(v)|^{q} \left( \int_{K_{0}}  |f(u-v)|^{p} du   \right )dv \\
& \leq & \|f\|^{r}_{L^{p}(\widetilde{K}_{0})} \|g\|^{r-q}_{L^{q}(W_{0})} \int_{W_{0}} |g(v)|^{q} dv\\
& = & \|f\|^{r}_{L^{p}(\widetilde{K}_{0})} \|g\|^{r}_{L^{q}(W_{0})},
\end{eqnarray*}
and so the inequality \eqref{young1} holds. It is easy to see the proof of \eqref{young1} for the other values of $p$ and $q$.
\hfill{$\Box$}
\end{proof}

\noindent
Now, we prove a version of the Young's inequality for mixed Lebesgue spaces in the group setting.

\begin{lem}
 Let $1<p,q< \infty$, $f \in L^{p,q}(G_{1} \times G_{2})$ and $g \in L^{1}(G_{1} \times G_{2})$ with $ supp (g) \subset
 W_{0} = W_{1} \times W_{2},$   where  $W_{1}$  and $ W_{2}$  are as in assumption (A1).  Then, for compact subsets    $K'_{1}$ and $K'_{2}$ of  $G_{1}$ and $G_{2}$ respectively and  $K_{0} =K'_{1} \times K'_{2},$ we have
 \begin{equation}
\|f*g\|_{L^{p,q}(K_{0})} \leq \|f\|_{L^{p,q}(\widetilde{K}_{0})}\|g\|_{L^{1}(W_{0})}, \label{mixYoung1}
\end{equation}
where $\widetilde{K}_{0}= K_{0} - W_{0}.$
Moreover, if $f\in L^{\infty}(\widetilde{K}_{0}), $ then
\begin{equation}
 \|f*g\|_{L^{\infty}(K_{0})} \leq  \|f\|_{L^{\infty}(\widetilde{K}_{0})}\|g\|_{L^{1}(W_{0})}. \label{mixYoung2}
\end{equation}
   \label{LemYoungMix}
\end{lem}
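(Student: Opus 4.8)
The plan is to reduce this mixed-norm convolution estimate to the single-variable Young inequality already established in Lemma~\ref{LemYoung1}, by peeling off the $L^q$-norm in the second variable first and the $L^p$-norm in the first variable afterwards. Throughout I would write the convolution on $G_1\times G_2$ as
$$(f*g)(u,v)=\int_{W_1}\int_{W_2} f(u-u',v-v')\,g(u',v')\,dv'\,du',$$
which is legitimate since $\mathrm{supp}(g)\subset W_0=W_1\times W_2$.

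First I would bound the inner $L^q_v$-norm. Applying Minkowski's integral inequality to move the $L^q(K'_2)$-norm in $v$ inside the $(u',v')$-integral gives
$$\big\|(f*g)(u,\cdot)\big\|_{L^q(K'_2)}\le \int_{W_1}\int_{W_2}|g(u',v')|\,\Big(\int_{K'_2}|f(u-u',v-v')|^q\,dv\Big)^{1/q}dv'\,du'.$$
The key bookkeeping step is the substitution $w=v-v'$: since $v\in K'_2$ and $v'\in W_2$, one has $w\in K'_2-W_2=:\widetilde{K}_{0,2}$, so the inner $q$-integral is dominated by $\phi(u-u')^q$, where $\phi(x):=\|f(x,\cdot)\|_{L^q(\widetilde{K}_{0,2})}$ is \emph{independent of $v'$} (and lies in $L^p(G_1)$ because $f\in L^{p,q}(G_1\times G_2)$). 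Setting $\psi(u'):=\|g(u',\cdot)\|_{L^1(W_2)}$, which is supported in $W_1$, the right-hand side collapses to the one-dimensional convolution $(\phi*\psi)(u)$.

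Next I would take the $L^p_u(K'_1)$-norm of both sides, giving $\|f*g\|_{L^{p,q}(K_0)}\le\|\phi*\psi\|_{L^p(K'_1)}$, and apply Lemma~\ref{LemYoung1} on $G_1$ with the choice $r=p$ and exponents $p$ and $1$, so that the hypothesis $\tfrac1p+\tfrac11=\tfrac1p+1$ holds, together with the compact sets $K'_1,W_1$. This yields $\|\phi*\psi\|_{L^p(K'_1)}\le\|\phi\|_{L^p(K'_1-W_1)}\,\|\psi\|_{L^1(W_1)}$. It then remains to identify the factors: since $\widetilde{K}_0=(K'_1-W_1)\times\widetilde{K}_{0,2}$, unravelling the definitions gives $\|\phi\|_{L^p(K'_1-W_1)}=\|f\|_{L^{p,q}(\widetilde{K}_0)}$ and $\|\psi\|_{L^1(W_1)}=\|g\|_{L^1(W_0)}$, which is exactly \eqref{mixYoung1}. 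The inequality \eqref{mixYoung2} is then immediate: for $(u,v)\in K_0$ and $(u',v')\in W_0$ one has $(u-u',v-v')\in\widetilde{K}_0$, so bounding $|f|$ by $\|f\|_{L^\infty(\widetilde{K}_0)}$ inside the convolution integral leaves $\int_{W_0}|g|=\|g\|_{L^1(W_0)}$.

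I expect the main obstacle to be the region tracking in the second paragraph: verifying that the translation by $v'\in W_2$ lands the inner integration set inside $\widetilde{K}_{0,2}$ uniformly in $v'$, so that the $v'$-dependence decouples and the estimate genuinely factors as a convolution of $\phi$ with $\psi$. Once this decoupling is in place, matching the exponent condition of Lemma~\ref{LemYoung1} (namely $p,1\mapsto p$) and the final identification of the norms over the product set $\widetilde{K}_0$ are routine.
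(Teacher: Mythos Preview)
Your proposal is correct and follows essentially the same route as the paper: reduce to one-dimensional convolutions of the sliced norms $\phi=\widetilde f$ and $\psi=\widetilde g$ via Minkowski, then apply Lemma~\ref{LemYoung1} on $G_1$. The only cosmetic difference is that the paper first applies Minkowski in the $u'$-variable alone and then invokes Lemma~\ref{LemYoung1} a second time on $G_2$ to handle the inner convolution $f_{u-u'}*g_{u'}$, whereas you apply Minkowski over both $(u',v')$ at once and replace the $G_2$ application of Lemma~\ref{LemYoung1} by the explicit substitution $w=v-v'$ and the containment $K'_2-v'\subset\widetilde K_{0,2}$; these amount to the same estimate.
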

\begin{proof}
For a fixed $u \in G_{1},$ let  $f_{u}$ and $g_{u}$ be the functions on $G_{2}$ defined by $f_{u}(v):=f(u,v)$ and $g_{u}(v):= g(u,v), v\in G_{2}.$ \\
\noindent
Then,
\begin{eqnarray}
 \|f*g\|^{p}_{L^{p,q}(K_{0})}  &=& \int_{K'_{1}} \left(\int_{K'_{2}}\left|\int_{W_{1}} (f_{u-u'}*g_{u'})(v) du' \right|^{q} dv \right)^{\frac{p}{q}} du \nonumber \\
& = &\int_{K'_{1}} \left \|\int_{W_{1}}(f_{u-u'}* g_{u'})(\cdot)du' \right \|^{p}_{L^{q}(K'_{2} )} du. \label{eq1N22}
\end{eqnarray}
\noindent
Using Minkowski's integral inequality and Lemma \ref{LemYoung1},  we  obtain
\begin{eqnarray*}
 \bigg \|\int_{W_{1}}(f_{u-u'}* g_{u'})(\cdot)du' \bigg \|_{L^{q}(K'_{2})} & \leq & \int_{W_{1}}\|f_{u-u'}* g_{u'} \|_{L^{q}(K'_{2})}du'  \\
& \leq & \int_{W_{1}}\|f_{u-u'}\|_{L^{q}(\widetilde{K}'_{2})} \|g_{u'} \|_{L^{1}(W_{2})}du' .
\end{eqnarray*}
So, from  \eqref{eq1N22}, we  get
\begin{eqnarray*}
 \|f*g\|^{p}_{L^{p,q}(K_{0})} & \leq & \int_{K'_{1}} \left (\int_{W_{1}}\|f_{u-u'}\|_{L^{q}(\widetilde{K}'_{2})} \|g_{u'} \|_{L^{1}(W_{2})}du' \right )^{p} du \\
& = &\int_{K'_{1}} \left (\int_{W_{1}}\widetilde{f}(u-u') \widetilde{g}(u') du' \right )^{p} du ,
\end{eqnarray*}
taking $\widetilde{f}(u)= \|f_{u}\|_{L^{q}(\widetilde{K}'_{2})}$ and $\widetilde{g}(u)= \|g_{u}\|_{L^{1}(W_{2})}, \ u \in G_{1}.$ It is easy to see that $\text{supp}(\widetilde{g}) \subset W_{1}$ and so we have
\begin{equation*}
\|f*g\|^{p}_{L^{p,q}(K_{0})}  \leq  \int_{K'_{1}} |(\widetilde{f}* \widetilde{g})(u)|^{p} du = \|\widetilde{f}* \widetilde{g}\|^{p}_{L^{p}(K'_{1})} .
\end{equation*}

\noindent
Now,
\begin{eqnarray*}
\|\widetilde{f}\|^{p}_{L^{p}(\widetilde{K}'_{1})}
&=&\int_{\widetilde{K}'_{1}} \|f_{u}\|^{p}_{L^{q}(\widetilde{K}'_{2})} du\\ &=& \int_{\widetilde{K}'_{1}} \bigg(\int_{\widetilde{K}'_{2}}|f(u,v)|^{q} dv \bigg )^{\frac{p}{q}} du\\
&=& \|f\|^{p}_{L^{p,q}(\widetilde{K}_{0})} \quad \quad \text{and}
\end{eqnarray*}

\begin{eqnarray*}
\|\widetilde{g}\|^{p}_{L^{1}(W_{1})}
&=& \bigg ( \int_{W_{1}} \|g_{u}\|_{L^{1}(W_{2})} du \bigg )^{p}\\ &=& \bigg( \int_{W_{1}} \int_{W_{2}}|g(u,v)| dv  du \bigg )^{p}\\
&=& \|g\|^{p}_{L^{1}(W_{0})}.
\end{eqnarray*}
Once again using Lemma \ref{LemYoung1}, we get
\begin{equation*}
\|f*g\|^{p}_{L^{p,q}(K_{0})}  \leq  \|\widetilde{f}* \widetilde{g}\|^{p}_{L^{p}(K'_{1})} \leq  \|f\|^{p}_{L^{p,q}(\widetilde{K}_{0})} \|g\|^{p}_{L^{1}(W_{0})},
\end{equation*}
 which proves \eqref{mixYoung1}. We note that it is easy to verify \eqref{mixYoung2}.
 \hfill{$\Box$}
\end{proof}

\bigskip
 Consider the sequence $\{(u_{j}, v_{k})\}_{j,k \in \mathbb{N}}$ of i.i.d. random variables that are drawn from a general probability distribution over a compact set $K$  with $\rho$ being its density function.
 For $f \in V_{K}(\Phi),$
 we define the random variables $Y_{j,k}(f),j,k \in \mathbb{N}$ by
\begin{equation}
Y_{j,k}(f):= |(f*\omega)(u_{j}, v_{k})|- \int_{K} \rho(u,v)|(f*\omega)(u,v)|du dv. \label{RandVar}
\end{equation}

\noindent
Now, we prove some  properties of these random variables in the following lemma.
\begin{lem}
For $f,g \in V_{K}(\Phi),$ the following hold:
\begin{flalign}
 (a) & \text{ The expectation}\  \mathbb{E}[Y_{j,k}(f)]=0. & \label{propert1}\\
 (b)  &\ \|Y_{j,k}(f)- Y_{j,k}(g)\|_{\ell^{\infty}} \leq 2 \|f-g\|_{L^{\infty}(\widetilde{K})} \|\omega\|_{L^{1}(W)}. \label{propert3}\\
(c) &\ Var \big(Y_{j,k}(f)-Y_{j,k}(g) \big ) \leq 4 \|f-g\|^{2}_{L^{\infty}(\widetilde{K})} \|\omega\|^{2}_{L^{1}(W)} . \label{Prop5}
 \end{flalign}
 Furthermore, if  $f \in V_{K}^{*}(\Phi),$ then
 \begin{flalign}
 (d) & \ \|Y_{j,k}(f)\|_{\ell^{\infty}} \leq \frac{\widetilde{C}_{\Phi}}{a_{1}} \|\omega\|_{L^{1}(W)} \mbox{ and } &  \label{property*} \\
 (e) &\ Var(Y_{j,k}(f)) \leq \left(\frac{\widetilde{C}_{\Phi}}{a_{1}}\right)^{2} \|\omega\|^{2}_{L^{1}(W)},& \label{property**}
  \end{flalign}
  where  $\widetilde{C}_{\Phi}$ and  $a_{1}$ are as in Lemma  \ref{lemCphi}.
\end{lem}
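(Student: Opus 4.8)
The plan is to establish the five claims by unwinding the definition \eqref{RandVar} of $Y_{j,k}(f)$ and leaning on the two Young-type estimates proved above together with Lemma \ref{lemCphi}. Throughout, write $A(f):=\int_{K}\rho(u,v)|(f*\omega)(u,v)|\,dudv$ for the deterministic shift, so that $Y_{j,k}(f)=|(f*\omega)(u_{j},v_{k})|-A(f)$.

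For part $(a)$, I would compute $\mathbb{E}[Y_{j,k}(f)]$ directly. Since $(u_{j},v_{k})$ is distributed on $K$ with density $\rho$, the expectation of the first term $\mathbb{E}\big[|(f*\omega)(u_{j},v_{k})|\big]$ equals $\int_{K}\rho(u,v)|(f*\omega)(u,v)|\,dudv$, which is exactly $A(f)$; the second term $A(f)$ is a constant, so the two cancel and the expectation is $0$. (One should note $f*\omega$ is bounded on $K$ by \eqref{mixYoung2} and Lemma \ref{lemCphi}, so the integral defining $A(f)$ is finite and the interchange is legitimate.)

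For part $(b)$, by the triangle inequality (reverse triangle inequality on the moduli) and linearity of convolution,
\begin{equation*}
|Y_{j,k}(f)-Y_{j,k}(g)| \le \big| |((f-g)*\omega)(u_{j},v_{k})| \big| + |A(f)-A(g)| \le 2\,\|(f-g)*\omega\|_{L^{\infty}(K)}.
\end{equation*}
Applying \eqref{mixYoung2} to $f-g\in V_{K}(\Phi)\subset L^{\infty}(\widetilde{K})$ gives $\|(f-g)*\omega\|_{L^{\infty}(K)}\le\|f-g\|_{L^{\infty}(\widetilde{K})}\|\omega\|_{L^{1}(W)}$, and taking the supremum over $j,k$ yields \eqref{propert3}. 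Part $(c)$ then follows immediately, since for any random variable $Z$ one has $\mathrm{Var}(Z)\le\mathbb{E}[|Z|^{2}]\le\|Z\|_{\ell^{\infty}}^{2}$; applying this to $Z=Y_{j,k}(f)-Y_{j,k}(g)$ and squaring the bound from $(b)$ gives the factor $4$ in \eqref{Prop5}.

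Parts $(d)$ and $(e)$ are the specialization of $(b)$ and $(c)$ to a single normalized function: taking $g=0$ (for which $Y_{j,k}(0)=0$) in the estimate underlying $(b)$ gives $\|Y_{j,k}(f)\|_{\ell^{\infty}}\le\|f*\omega\|_{L^{\infty}(K)}+A(f)\le 2\|f\|_{L^{\infty}(\widetilde{K})}\|\omega\|_{L^{1}(W)}$, but for $f\in V_{K}^{*}(\Phi)$ one sharpens this by first bounding $A(f)\le\|f*\omega\|_{L^{\infty}(K)}$ and using $\|f\|_{L^{\infty}(\widetilde{K})}\le\widetilde{C}_{\Phi}/a_{1}$ from Lemma \ref{lemCphi}, giving \eqref{property*}, and then $(e)$ follows by the same variance-versus-sup-norm inequality used for $(c)$. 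I do not anticipate a genuine obstacle here; the only point requiring care is getting the constants sharp in $(d)$--$(e)$—in particular realizing that the centering subtraction lets one avoid the naive factor of $2$ by noting $|{\,|X|-\mathbb{E}|X|\,}|\le\|\,|X|\,\|_{\ell^\infty}$ is \emph{not} what one wants, so instead one bounds $\mathrm{Var}$ by the second moment $\mathbb{E}\big[|(f*\omega)(u_{j},v_{k})|^{2}\big]\le\|f*\omega\|_{L^{\infty}(K)}^{2}$ directly, keeping the constant at $1$ rather than $4$ in \eqref{property**}.
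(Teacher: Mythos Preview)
Your approach is correct and essentially identical to the paper's: (a) by direct computation, (b) via the reverse triangle inequality together with \eqref{mixYoung2}, (c) from $\mathrm{Var}(Z)\le\|Z\|_{\ell^\infty}^2$, and (d)--(e) by combining \eqref{mixYoung2} with Lemma~\ref{lemCphi}. One wording glitch to fix: the inequality $\bigl|\,|X|-\mathbb{E}|X|\,\bigr|\le\|\,|X|\,\|_{\ell^\infty}$ (valid because both $|X|$ and $\mathbb{E}|X|$ lie in $[0,\|f*\omega\|_{L^\infty(K)}]$) \emph{is} exactly what gives the sharp constant in (d)---the paper writes this as $|Y_{j,k}(f)|\le\max\{|(f*\omega)(u_j,v_k)|,\,A(f)\}\le\|f*\omega\|_{L^\infty(K)}$---so your remark that it ``is not what one wants'' is backwards; merely bounding $A(f)\le\|f*\omega\|_{L^\infty(K)}$ and adding, as you first write, would leave the factor $2$.
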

\begin{proof}
We observe that (a) is trivial.

\vskip 1em
\noindent (b)  Consider,
\begin{eqnarray*}
&& \|Y_{j,k}(f)- Y_{j,k}(g)\|_{\ell^{\infty}}\\
&& \quad \leq  \sup_{j,k}\left| |((f-g)\ast \omega)(u_{j}, v_{k})| +   \int_{K} \rho(u,v)|((f-g)*\omega)(u,v) | dudv \right|\\&&
 \quad \leq  \sup_{(u',v') \in K}\left( | ((f-g) \ast \omega )(u', v')| + \|(f-g) \ast \omega\|_{L^{\infty}(K)}  \int_{K} \rho(u,v)dudv \right )\\
&& \quad \leq 2\|(f-g) \ast \omega\|_{L^{\infty}(K)}\\
&& \quad \leq 2\|f -g\|_{L^{\infty}(\widetilde{K})} \|\omega\|_{L^{1}(W)},
\end{eqnarray*}
by \eqref{mixYoung2}.

\vskip 1em
\noindent (c) In view of  \eqref{propert1}, we have
 \begin{eqnarray*} && Var \big(Y_{j,k}(f)-Y_{j,k}(g) \big ) \\
 && \quad  = \mathbb{E}\big[\big(Y_{j,k}(f)-Y_{j,k}(g) \big )^{2}\big] \\
 && \quad \leq \mathbb{E}\left [\bigg(\left|\big( (f-g) \ast \omega \big )(u_{j}, v_{k}) \right| +  \int_{K}\rho(u,v) \left|\big((f-g)*\omega \big)(u,v)\right | dudv \bigg)^{2} \right] \\
  && \quad = \mathbb{E}\big[|\big( (f-g) \ast \omega \big )(u_{j}, v_{k})|^{2}\big] +\bigg(  \int_{K}\rho(u,v) |\big((f-g)*\omega \big)(u,v)| dudv\bigg)^{2}\\
 && \quad \quad \quad  + \ 2 \bigg(  \int_{K}\rho(u,v) |\big((f-g)*\omega \big)(u,v)| dudv\bigg)\mathbb{E}\big[|\big( (f-g) \ast \omega \big )(u_{j}, v_{k})|\big]\\
&&  \quad \leq 4\|(f-g) \ast \omega\|^{2}_{L^{\infty}(K)}\\
&& \quad \leq 4  \|f-g\|^{2}_{L^{\infty}(\widetilde{K})} \|\omega\|^{2}_{L^{1}(W)},
\end{eqnarray*}
by \eqref{mixYoung2}.

\vskip 1em
\noindent (d) As  $ \displaystyle \int_{K} \rho(u,v)|(f*\omega)(u,v)|dudv \leq \|f*\omega\|_{L^{ \infty}(K)},$ we have
 \begin{eqnarray*}
 \|Y_{j,k}(f)\|_{\ell^{\infty}} & =&  \sup_{j,k}\bigg| \left|(f \ast \omega)(u_{j}, v_{k})\right| -  \int_{K} \rho(u,v)|(f*\omega)(u,v)|dudv  \bigg|\\
 & \leq & \sup_{j,k} \max\bigg\{ \left|(f \ast \omega)(u_{j}, v_{k}) \right| ,  \int_{K} \rho(u,v)|(f*\omega)(u,v)|dudv \bigg\}\\
& \leq & \|f \ast \omega\|_{L^{\infty}(K)}\\
& \leq & \|f \|_{L^{\infty}(\widetilde{K})} \|\omega\|_{L^{1}(W)} \\
& \leq & \frac{\widetilde{C}_{\Phi}}{a_{1}} \|\omega\|_{L^{1}(W)},
\end{eqnarray*}
using \eqref{mixYoung2} and Lemma \ref{lemCphi}.

\vskip 1em
\noindent (e) Using \eqref{propert1}, we get $Var(Y_{j,k}(f))= \mathbb{E}[Y_{j,k}(f)^{2}] $
and so, \begin{eqnarray*} \mathbb{E}[(Y_{j,k}(f))^{2}] &=& \mathbb{E}\left[|(f \ast \omega)(u_{j}, v_{k})|^{2} \right] + \mathbb{E}\left[\bigg( \int_{K} \rho(u,v)|(f*\omega)(u,v)|dudv \bigg )^{2} \right ] \\
&& \quad  \quad - \ 2 \mathbb{E}\left[|(f \ast \omega)(u_{j}, v_{k})| \bigg ( \int_{K} \rho(u,v)|(f*\omega)(u,v)|dudv \bigg ) \right ] \\
&  = & \mathbb{E}\left[|(f \ast \omega)(u_{j}, v_{k})|^{2} \right]- \bigg( \int_{K} \rho(u,v)|(f*\omega)(u,v)|dudv \bigg)^{2},
\end{eqnarray*}
 which leads to the inequality
\begin{eqnarray*}
Var(Y_{j,k}(f)) &\leq& \ \mathbb{E}[|(f \ast \omega)(u_{j}, v_{k})|^{2}]\\
&\leq& \|f \ast \omega\|^{2}_{L^{\infty}(K)}\\
&\leq&   \|f\|^{2}_{L^{\infty}(\widetilde{K})} \|\omega\|^{2}_{L^{1}(W)}\\
&\leq&   \left(\frac{\widetilde{C}_{\Phi}}{a_{1}}\right)^{2} \|\omega\|^{2}_{L^{1}(W)},
\end{eqnarray*}
using \eqref{mixYoung2} and Lemma \ref{lemCphi}.
\hfill{$\Box$}
\end{proof}

We note that $V^{*}_{K}(\Phi)$ is a compact  subset of $V_{K}(\Phi)$ equipped  with the $L^{\infty}(\widetilde{K})$-norm,   which is indeed a finite dimensional Banach space. So, $V^{*}_{K}(\Phi)$ can be covered by a finite number of closed balls in  $V_{K}(\Phi)$ of a given radius $\eta>0.$  In the following lemma, we provide an upper bound for the least number of such balls. In other words,  an estimate for  its covering number is given.

\begin{lem}  \label{LemCovering}
For any $\eta>0,$  a bound for the covering number $\mathcal{N}(V_{K}^{*}(\Phi), \eta)$ of $V_{K}^{*}(\Phi)$ with respect to the norm $\|\cdot\|_{L^{\infty}(\widetilde{K})}$ is given by
\begin{equation}
\mathcal{N}(V_{K}^{*}(\Phi), \eta) \leq \exp \left(d \ln \left(\frac{4\widetilde{C}_{\phi}}{\eta a_{1}}\right)\right), \label{covrngEq}
\end{equation}
where $\widetilde{C}_{\phi}$ and $a_{1}$ are as in Lemma \ref{lemCphi} and $d$ is the dimension of $V_{K}(\Phi).$
\end{lem}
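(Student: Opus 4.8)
The plan is to run the classical volumetric packing argument for a finite-dimensional normed space, with the single twist that the set to be covered is the $L^{p,q}(\widetilde{K})$-unit sphere $V_K^*(\Phi)$ while the covering balls are measured in the $L^\infty(\widetilde{K})$-norm; the bridge between the two norms is supplied by Lemma \ref{lemCphi}. Throughout, write $M := \widetilde{C}_\Phi/a_1$, so that Lemma \ref{lemCphi} furnishes the uniform bound $\|f\|_{L^\infty(\widetilde{K})} \le M$ for every $f \in V_K^*(\Phi)$. This single inequality is the only place where the $L^{p,q}$ and $L^\infty$ structures interact, and it is what makes the argument go through.

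First I would reduce the covering problem to a packing problem. Choose a maximal subset $\{f_1,\dots,f_N\} \subset V_K^*(\Phi)$ that is $\eta$-separated in the $L^\infty(\widetilde{K})$-norm, i.e. $\|f_i-f_j\|_{L^\infty(\widetilde{K})} \ge \eta$ for $i \ne j$. By maximality no point of $V_K^*(\Phi)$ can be at distance $\ge \eta$ from all the $f_i$, so this set is an $\eta$-net and hence $\mathcal{N}(V_K^*(\Phi),\eta) \le N$; it therefore suffices to bound $N$.

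Next comes the volume comparison, carried out in the $d$-dimensional real vector space $V_K(\Phi)$. Fix any linear isomorphism $V_K(\Phi) \cong \mathbb{R}^d$ and pull back Lebesgue measure to obtain a translation-invariant measure $\mathrm{vol}$, under which dilation by $r>0$ scales the volume of any fixed set by $r^d$. Let $B_\infty(h,\rho)$ denote the closed $L^\infty(\widetilde{K})$-ball of radius $\rho$ about $h$. Since the centres are $\eta$-separated, the balls $B_\infty(f_i,\eta/2)$ are pairwise disjoint; and since $\|f_i\|_{L^\infty(\widetilde{K})} \le M$, each is contained in $B_\infty(0,M+\eta/2)$. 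All these balls are homothetic copies of $B_\infty(0,1)$, so comparing the total volume of the disjoint small balls with that of the enclosing ball gives
\begin{equation*}
N\Big(\tfrac{\eta}{2}\Big)^d \mathrm{vol}\big(B_\infty(0,1)\big) \;\le\; \big(M+\tfrac{\eta}{2}\big)^d \mathrm{vol}\big(B_\infty(0,1)\big),
\end{equation*}
whence $N \le \big((2M+\eta)/\eta\big)^d = (1+2M/\eta)^d$.

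Finally I would clean up the constant. For the meaningful range $\eta \le 2M$ one has $1+2M/\eta \le 4M/\eta$, so
\begin{equation*}
\mathcal{N}(V_K^*(\Phi),\eta) \le N \le \Big(\frac{4M}{\eta}\Big)^d = \Big(\frac{4\widetilde{C}_\Phi}{\eta a_1}\Big)^d = \exp\!\left(d\ln\!\Big(\frac{4\widetilde{C}_\Phi}{\eta a_1}\Big)\right),
\end{equation*}
as claimed (when $\eta > 2M$ the $L^\infty$-diameter of $V_K^*(\Phi)$ is at most $2M < \eta$, so a single ball covers it and the estimate is trivial). The argument is almost entirely routine: the classical scaling/packing estimate is insensitive to the particular norms involved, since all the balls used are homothetic convex bodies in a finite-dimensional space. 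The one genuinely essential point — and the step I would flag as the crux rather than an obstacle — is the passage between the two norms, namely that the net points, although constrained to lie on the $L^{p,q}$-sphere, have uniformly bounded $L^\infty$-norm, which is exactly the content of Lemma \ref{lemCphi}.
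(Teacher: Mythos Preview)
Your proof is correct and follows essentially the same strategy as the paper: use Lemma~\ref{lemCphi} to embed $V_K^*(\Phi)$ into the $L^\infty(\widetilde{K})$-ball of radius $\widetilde{C}_\Phi/a_1$, then bound the covering number of that ball by $(4\widetilde{C}_\Phi/(\eta a_1))^d$.

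The only difference is one of presentation. The paper simply invokes Proposition~5 of Cucker--Smale \cite{CukerSmale} for the covering number of a ball in a $d$-dimensional normed space, whereas you reproduce the standard volumetric packing argument (maximal $\eta$-separated set, disjoint half-radius balls inside an enlarged ball, volume comparison) in full. Your version is more self-contained and makes explicit why the two norms cause no difficulty --- all balls used in the volume comparison are $L^\infty$-balls, and Lemma~\ref{lemCphi} is only needed once to confine the centres. The paper's version is shorter but relies on the reader knowing or looking up the cited result. One minor caveat: your parenthetical remark that ``the estimate is trivial'' for $\eta>2M$ tacitly assumes $\eta\le 4M$ so that the right-hand side is at least $1$; for $\eta>4M$ the stated bound is vacuously false, but this is an artefact of the statement itself (shared by the paper) and irrelevant to the applications, which all take $\eta$ small.
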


\begin{proof}

By Lemma  \ref{lemCphi}, $V^{*}_{K}(\Phi)$ is contained in the closed ball $B\left(0, \tfrac{\widetilde{C}_{\Phi}}{a_{1}}\right)$ of radius
 $\tfrac{\widetilde{C}_{\Phi}}{a_{1}}$
centered at the origin in the  Banach space $ \left(V_{K}(\Phi),\|\cdot\|_{L^\infty(\widetilde{K}} \right )$. For $\eta>0,$ the covering number  $\mathcal{N}\left(B\left(0, \tfrac{\widetilde{C}_{\Phi}}{a_{1}}\right), \eta\right)$ of the ball $B\left(0, \tfrac{\widetilde{C}_{\Phi}}{a_{1}}\right)$ is bounded by  $ \exp \left(d \ln \left(\frac{4\widetilde{C}_{\phi}}{\eta a_{1}}\right)\right),$ by Proposition 5 in
\cite{CukerSmale}. Clearly, $\mathcal{N}(V^{*}_{K}(\Phi),\eta)\le
\mathcal{N}\left(B\left(0, \tfrac{\widetilde{C}_{\Phi}}{a_{1}}\right), \eta\right)$ and hence   (\ref{covrngEq}) follows.
\hfill{$\Box$}
\end{proof}

\section{Sampling inequalities} \label{sec3}
In this section, we prove  sampling inequalities  for two subsets of  $V_{K}(\Phi),$ namely  $V^{p,q}_{K,\alpha}(\Phi, \theta)$  and $V_{K}(\Phi,\omega,\mu),$  which are defined as follows:

\noindent
For  $\theta,\,\alpha>0,$
$$V^{p,q}_{K,\alpha}(\Phi, \theta):=\left\{f\in  V_{K}(\Phi): \|f*\omega\|_{L^{p,q}(K)} \geq \theta\text{ and }\|f\|_{L^{p,q}(\widetilde{K})}\leq\alpha\right\} $$
\noindent  and
for $ 0<\mu\leq 1,$
  $$V_{K}(\Phi,\omega,\mu):=\left\{f\in V_K(\Phi):\mu\|\omega\|_{L^1(W)}\|f\|_{L^{p,q}(\widetilde{K})}\leq\|f\ast\omega\|_{L^{1}(K)}\right\}.$$

The proofs of these sampling inequalities will make use of an important lemma,  which is stated below.

\begin{lem} Let $ \omega$ and $\Phi$ satisfy the assumptions $(\text{A}_{1})- (\text{A}_{3})$  and $Y_{j,k}$ be as defined in \eqref{RandVar}. Then for any $m, n \in \mathbb{N}$ and \\
$\mathfrak{p} >  108  \sqrt{2} (\ln 2) d \left(1+ \left(1+\dfrac{3nm}{2  \sqrt{2} (\ln 2)d } \right )^{\frac{1}{2}} \right ) \|\omega\|_{L^{1}(W)},$
 the following inequality holds:
 \begin{eqnarray}
 && Prob\left(\sup_{f \in V_{K}^{*}(\Phi)} \left|\sum_{j=1}^{n} \sum_{k=1}^{m} Y_{j,k}(f)\right|  \geq \mathfrak{p} \right) \nonumber \\
    && \quad < \mathcal{A}_{1} \exp \left(  \dfrac{-3a_{1}^{2}\mathfrak{p}^{2}}{4\widetilde{C}_{\Phi}\|\omega\|_{L^{1}(W)}(6 n m\widetilde{C}_{\Phi} \|\omega\|_{L^{1}(W)}+  \mathfrak{p} a_{1})}\right) \label{3eq10} \\
&&      \quad \quad + \mathcal{A}_{2} \exp \left(  \dfrac{- \mathfrak{p}^{2}}{72 \sqrt{2}\|\omega\|_{L^{1}(W)}(81 n m \|\omega\|_{L^{1}(W)}+  \mathfrak{p})}\right), \nonumber \end{eqnarray}
where $\mathcal{A}_{1}= 2 \exp \left(d \ln \left(\dfrac{8\widetilde{C}_{\Phi}}{a_{1}}\right) \right), \mathcal{A}_{2}= \dfrac{4}{3 d(\ln 2)^{2} } \left(\dfrac{4 \widetilde{C}_{\Phi}}{a_{1}}\right)^{2d}$ and,  $ \widetilde{C}_{\Phi}$ and $a_{1}$ are as in Lemma \ref{lemCphi}.
  \label{lemm3.3}
\end{lem}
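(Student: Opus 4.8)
The plan is to derive the uniform estimate \eqref{3eq10} by coupling a Bernstein-type concentration bound for each \emph{fixed} $f$ with a covering/chaining argument that promotes the pointwise bound to the supremum over the compact set $V_K^*(\Phi)$. The starting point is that, for fixed $f\in V_K^*(\Phi)$, the summands $\{Y_{j,k}(f)\}$ are independent (the samples $(u_j,v_k)$ are i.i.d.) and centered by \eqref{propert1}, while \eqref{property*} and \eqref{property**} bound their range by $M=\tfrac{\widetilde C_\Phi}{a_1}\|\omega\|_{L^1(W)}$ and their variance by $\sigma^2=M^2$. Hence Bernstein's inequality applied to the $nm$ terms gives, for every $t>0$,
$$Prob\!\left(\Big|\sum_{j=1}^{n}\sum_{k=1}^{m}Y_{j,k}(f)\Big|\ge t\right)\le 2\exp\!\left(\frac{-t^2}{2\big(nm\,\sigma^2+M t/3\big)}\right).$$
Substituting $M$ and $\sigma^2$ and taking $t=\mathfrak p/2$ reproduces exactly the exponent of the first summand in \eqref{3eq10}.

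Next I would discretize $V_K^*(\Phi)$ via Lemma \ref{LemCovering}. Fix the coarse scale $\eta_0=\tfrac12$ and pick an $\eta_0$-net $\mathcal S_0$ of cardinality $N_0\le\exp\!\big(d\ln\tfrac{8\widetilde C_\Phi}{a_1}\big)$. A union bound of the Bernstein estimate above over $\mathcal S_0$ at level $t=\mathfrak p/2$ yields precisely the first term $\mathcal A_1\exp(\cdots)$, since $\mathcal A_1=2N_0$. It then remains to control the fluctuation $\sup_{f}\big|\sum_{j,k}\big(Y_{j,k}(f)-Y_{j,k}(\pi_0 f)\big)\big|$, where $\pi_0 f\in\mathcal S_0$ is a nearest net point; this produces the second term.

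For the fluctuation I would run a dyadic chaining argument. Take refined nets $\mathcal S_\ell$ at scales $\eta_\ell=2^{-\ell-1}$ with $N_\ell\le\big(\tfrac{4\widetilde C_\Phi}{a_1}\big)^{d}2^{(\ell+1)d}$, link each $f$ to a chain $\pi_\ell f\in\mathcal S_\ell$, and telescope $Y_{j,k}(f)=Y_{j,k}(\pi_0 f)+\sum_{\ell\ge0}\big(Y_{j,k}(\pi_{\ell+1}f)-Y_{j,k}(\pi_\ell f)\big)$. Since $\|\pi_{\ell+1}f-\pi_\ell f\|_{L^\infty(\widetilde K)}\le 3\cdot2^{-\ell-2}$, the properties \eqref{propert3} and \eqref{Prop5} give the $\ell$-th increment a range of order $2^{-\ell}\|\omega\|_{L^1(W)}$ and a variance of order $2^{-2\ell}\|\omega\|^2_{L^1(W)}$. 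Applying Bernstein to each increment, with a union bound over the at most $N_\ell N_{\ell+1}=\big(\tfrac{4\widetilde C_\Phi}{a_1}\big)^{2d}2^{(2\ell+3)d}$ admissible pairs, one distributes the remaining budget $\mathfrak p/2=\sum_{\ell\ge0}\mathfrak p_\ell$ across scales (a choice with $\mathfrak p_\ell$ essentially proportional to $2^{-\ell}$), applies the per-scale bounds $2N_\ell N_{\ell+1}\exp(-E_\ell)$, and sums. The common exponential factor emerging from the coarsest level is $\exp\!\big(-\mathfrak p^2/[72\sqrt2\,\|\omega\|_{L^1(W)}(81nm\|\omega\|_{L^1(W)}+\mathfrak p)]\big)$, while the geometric tail collapses to the constant $\mathcal A_2=\tfrac{4}{3d(\ln2)^2}\big(\tfrac{4\widetilde C_\Phi}{a_1}\big)^{2d}$.

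The main obstacle is exactly this last bookkeeping: the per-scale thresholds $\mathfrak p_\ell$ must be chosen so that simultaneously $\sum_\ell\mathfrak p_\ell=\mathfrak p/2$, the Bernstein exponents $E_\ell$ grow with $\ell$ fast enough that $\exp(-E_\ell)$ dominates the pair-count growth $2^{(2\ell+3)d}$, and the resulting convergent series collapses into a single clean exponential. The requirement that $E_\ell$ exceed $\ln(2N_\ell N_{\ell+1})$ is a constraint of the form $\mathfrak p^2\ge C(nm+\mathfrak p)$, whose positive root is $\tfrac{C}{2}\big(1+\sqrt{1+4nm/C}\big)$; this is precisely the shape of the standing hypothesis $\mathfrak p>108\sqrt2(\ln2)d\big(1+(1+\tfrac{3nm}{2\sqrt2(\ln2)d})^{1/2}\big)\|\omega\|_{L^1(W)}$, and the $\sqrt2$ and the $(1+(1+\cdot)^{1/2})$ factor are its signature. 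Tracking the numerical constants ($72\sqrt2$, $81$, and the factor $\tfrac{1}{d(\ln2)^2}$) through the geometric summation is the only genuinely laborious part.
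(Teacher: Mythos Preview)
Your overall architecture matches the paper: Bernstein on a coarse $\tfrac12$-net gives the $\mathcal A_1$-term (this computation is correct), and a chaining argument along dyadic nets handles the fluctuation and yields the $\mathcal A_2$-term. The genuine gap is your budget allocation across scales.

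You propose $\mathfrak p_\ell\propto 2^{-\ell}$. But at scale $\ell$ the increment has range $M_\ell\sim 2^{-\ell}\|\omega\|_{L^1(W)}$ and variance $\sigma_\ell^2\sim 2^{-2\ell}\|\omega\|_{L^1(W)}^2$ by \eqref{propert3}--\eqref{Prop5}. Plugging $\mathfrak p_\ell=c\,2^{-\ell}$ into Bernstein gives
\[
E_\ell\;\sim\;\frac{\mathfrak p_\ell^{\,2}}{2\big(nm\,\sigma_\ell^2+M_\ell\mathfrak p_\ell/3\big)}\;\sim\;\frac{c^2\,2^{-2\ell}}{nm\cdot 2^{-2\ell}+2^{-2\ell}}\;=\;\text{const.\ in }\ell,
\]
so the exponent does \emph{not} grow with $\ell$, whereas the pair count $N_\ell N_{\ell+1}\sim 2^{2\ell d}$ does. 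The sum $\sum_\ell N_\ell N_{\ell+1}e^{-E_\ell}$ therefore diverges, and the argument collapses. A geometric budget matches the decay of the increments exactly, which is precisely the borderline case where chaining fails to buy anything.

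The fix (and what the paper does) is to let the budget decay \emph{slower} than the increments: take $\mathfrak p_l=\mathfrak p/(2l^2)$, so that $\sum_{l\ge1}\mathfrak p_l=\pi^2\mathfrak p/12<\mathfrak p$. Then the Bernstein exponent picks up a factor $2^l/l^4$ (roughly), which is summed against the entropy growth $2^{2ld}$ by splitting $2^l=2^{l/2}\cdot 2^{l/2}$ and using the elementary bounds $\min_{l\ge2}2^{l/2}/l^2=2/9$, $\max_{l\ge2}l/2^{l/2}=3/\sqrt8$, and $l^2 2^{-l}\le 9/8$. The hypothesis on $\mathfrak p$ is exactly what makes the net exponent at each scale bounded below by $\nu\,2^{l/2}/9$ with $\nu$ independent of $l$; an integral test then collapses the tail $\sum_{l\ge2}\exp(-\nu 2^{l/2}/9)$ into the single exponential with the constants $72\sqrt2$, $81$, and the prefactor $\tfrac{4}{3d(\ln2)^2}$. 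Your description of the origin of the quadratic constraint on $\mathfrak p$ is morally right, but it actually arises from forcing $\tfrac{2^{l/2}}{l^2}-\tfrac{2ld\ln2}{\nu\,2^{l/2}}>\tfrac19$ rather than from a per-scale entropy comparison.
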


\begin{proof}
 Let $l \in \mathbb{N}$ and $n_{l}$  denote the covering number of $V_{K}^{*}(\Phi)$ with respect to the norm $\|\cdot\|_{L^{\infty}(\widetilde{K})}$ and radius  $2^{-l}.$  We consider a cover for $V_{K}^{*}(\Phi)$ consisting of  $n_{l}$ balls $B(h_{l,i},2^{-l})$ in $V_{K}(\Phi)$ having radius $2^{-l}$ and centered at $h_{l,i}, 1\le i\le n_{l}.$  Without any loss of generality, we may assume that each of these balls have non-empty intersection with $V_{K}^{*}(\Phi).$ Let $g_{l,i}\in V_{K}^{*}(\Phi)$  be chosen such that it is closest to $h_{l,i}$ for each $i.$  Let $\mathcal{E}_{l}:=\{g_{l,i}:1\le i\le n_{l}\}.$ For  $f \in V_{K}^{*}(\Phi),$ there exists $f_{l} \in \mathcal{E}_{l}$ such that $\|f-f_{l}\|_{L^{\infty}(\widetilde{K}) } \le 2^{1-l}.$  In view of \eqref{propert3}, we can see that for  $ j,k\in   \mathbb{N},$
 \begin{equation}
 Y_{j,k}(f)=Y_{j,k}(f_{1}) + (Y_{j,k}(f_{2}) - Y_{j,k}(f_{1})) + (Y_{j,k}(f_{3})-Y_{j,k}(f_{2}))+ \cdots. \label{equation*}
\end{equation}
 Now, for $\mathfrak{p}\ge 0,$ we define a sequence of events  $\{\textbf{E}_{l}\}_{l \in \mathbb{N}}$ as follows:
 \begin{equation*}
 \textbf{E}_{1}= \left\{\text{there exists }  g \in \mathcal{E}_{1} \ \text{such that} \ \left|\sum_{j=1}^{n} \sum_{k=1}^{m} Y_{j,k}(g)\right| \geq \dfrac{\mathfrak{p}}{2}\right\}
\end{equation*}
and for $l \geq 2$, \\

$\textbf{E}_{l}= \bigg\{\text{there exist }  g \in \mathcal{E}_{l}\   \text{and}\  h \in \mathcal{E}_{l-1}\ \text{with} \ \|g- h\|_{L^{\infty}(\widetilde{K})}\leq 6\cdot2^{-l}\\ \text{\ \ \ \ \ \ \ \ \ \ \ \ \ \ \ \ \ \ \ \ \ \ \ \ \ \ \ \ \ \ \ \ \ \ \ \ \ \ such that}  \ \bigg| \displaystyle {\sum_{j=1}^{n} \sum_{k=1}^{m} \left(Y_{j,k}(g)- Y_{j,k}(h) \right)\bigg| \geq \dfrac{\mathfrak{p}}{2l^{2}}} \bigg\}.$

 \noindent Suppose $\displaystyle{ \sup_{f \in V_{K}^{*}(\Phi)} \left|\sum_{j=1}^{n} \sum_{k=1}^{m} Y_{j,k}(f)\right|> \mathfrak{p}}$ is true.  Then, there exists some $l \in \mathbb{N}$ for which  the event $\mathbf{E}_{l}$ will hold. If  this  is not the case, then for $f \in V_{K}^{*}(\Phi),$   \eqref{equation*} gives
 \begin{eqnarray*}
 \left|\sum_{j=1}^{n} \sum_{k=1}^{m} Y_{j,k}(f) \right| & \leq & \left|\sum_{j=1}^{n} \sum_{k=1}^{m} Y_{j,k}(f_{1})\right| + \sum_{l=2}^{\infty} \left|\sum_{j=1}^{n} \sum_{k=1}^{m} \big(Y_{j,k}(f_{l})- Y_{j,k}(f_{l-1}) \big ) \right | \\
 & < &\dfrac{\mathfrak{p}}{2} + \sum_{l=2}^{\infty} \dfrac{\mathfrak{p}}{2l^{2}}=\dfrac{\pi^{2} \mathfrak{p}}{12} < \mathfrak{p},
\end{eqnarray*}
which is a contradiction.
Now, we shall estimate an upper bound for the probability of each of the events $\textbf{E}_{l}, l \in \mathbb{N}.$ As the cardinality  of $\mathcal{E}_{l}$ is $n_{l}$, we get

%of we have $\mathcal{N}(V_{K}^{*}(\Phi), 2^{-l})$ balls of radius $2^{-l},$ thus in presence of  Lemma \ref{LemCovering} and Lemma \ref{LemProb}, the probability for the event $\textbf{E}_{1}$ is estimated by

\begin{eqnarray*}
 Prob(\textbf{E}_{1})  & \leq & n_1 \times Prob\left(\left|\sum_{j=1}^{n} \sum_{k=1}^{m} Y_{j,k}(g)\right |\geq \frac{\mathfrak{p}}{2} : g \in \mathcal{E}_{1}\right)\\
 & \leq & \exp \left(d \ln \left(\frac{8 \widetilde{C}_{\Phi}}{a_{1}}\right)\right) Prob\left(\left|\sum_{j=1}^{n} \sum_{k=1}^{m} Y_{j,k}(g)\right |\geq \frac{\mathfrak{p}}{2} : g\in \mathcal{E}_{1}\right),
%& \leq &\exp \left(d \ln \left(\frac{8 \widetilde{C}_{\Phi}}{a_{1}}\right)\right) \nonumber \\
%&& \quad \quad \quad \quad \times 2 \exp \left(- \dfrac{(\frac{\mathfrak{p}}{2})^{2}}{2nm \|Var(Y_{j,k}(f))\|_{\ell^{ \infty}} +\frac{2}{3} \|Y_{j,k}(f)\|_{\ell^{\infty}} \frac{\mathfrak{p}}{2}}  \right ).
\end{eqnarray*}
by Lemma  \ref{LemCovering}. Using  Bernstein's inequality given in   \cite{bennett},  we may write
 for $ g \in \mathcal{E}_{1},$
\begin{eqnarray*}
Prob\left(\left|\sum_{j=1}^{n} \sum_{k=1}^{m} Y_{j,k}(g)\right |\geq \frac{\mathfrak{p}}{2} \right)<
2  \exp \left( \frac{-(\frac{\mathfrak{p}}{2})^{2}}{2  \sum\limits_{j=1}^{n} \sum\limits_{k=1}^{m}  Var(Y_{j,k}(g)) +\frac{2}{3} M \frac{\mathfrak{p}}{2}}  \right ),
\end{eqnarray*}
 where  $M$ satisfies  $\left|Y_{j,k}(g)-E[Y_{j,k}(g)] \right|\le M $ for all $j,k.$ By  \eqref{propert1} and  \eqref{property*},
we may choose
$M=\frac{\widetilde{C}_{\Phi}}{a_{1}} \|\omega\|_{L^{1}(W)}$. Further, making use of  \eqref{property**}, it follows that
\begin{eqnarray}
 Prob(\textbf{E}_{1}) & < & 2\exp \left(d \ln \left(\frac{8 \widetilde{C}_{\Phi}}{a_{1}}\right)\right) \nonumber \\
&& \times \exp \left( \dfrac{- 3a_{1}^{2}\mathfrak{p}^{2}}{4\widetilde{C}_{\Phi}\|\omega\|_{L^{1}(W)} \left(6 n m\widetilde{C}_{\Phi} \|\omega\|_{L^{1}(W)}+  \mathfrak{p} a_{1} \right)}  \right ). \nonumber \\ \label{ProbE1}
\end{eqnarray}

\noindent
For the events $\textbf{E}_{l}, l \geq 2,$ we have
\begin{eqnarray}
&& Prob(\textbf{E}_{l}) \leq    n_{l} n_{l-1}
Prob \bigg(\bigg|\sum_{j=1}^{n} \sum_{k=1}^{m} \left(Y_{j,k}(g)- Y_{j,k}(h) \right) \bigg | \geq \frac{\mathfrak{p}}{2l^{2}} : g \in \mathcal{E}_{l},  \nonumber \\
&& \quad \quad \quad \quad \quad \quad \quad \quad \quad \quad \quad \quad \quad \quad  \quad  h \in \mathcal{E}_{l-1}, \|g-h\|_{L^{\infty}(\widetilde{K})} \leq 6\cdot2^{-l} \bigg).\nonumber \\ \label{ProbEl}
\end{eqnarray}
By using  Lemma \ref{LemCovering}, we obtain
\begin{eqnarray}
  n_{l} n_{l-1}
&& \leq \exp \left(d \ln \left(\frac{4 \widetilde{C}_{\Phi}}{2^{-l}a_{1}}\right)\right) \times \exp \left(d \ln \left(\frac{4 \widetilde{C}_{\Phi}}{2^{-l+1}a_{1}}\right)\right) \nonumber \\
&& = \exp \left(d \left(\ln \left(\frac{4 \widetilde{C}_{\Phi}}{a_{1}}\right)^{2} + \ln \left(\frac{1}{2^{-l} 2^{-l+1}} \right) \right ) \right ) \nonumber \\
&& \leq \left( \frac{4 \widetilde{C}_{\Phi}}{a_{1}}\right)^{2d} \exp (2 l d \ln 2). \label{LemEq1}
\end{eqnarray}
In view of \eqref{propert3} and \eqref{Prop5}, we obtain
\begin{eqnarray}
\|Y_{j,k}(g)- Y_{j,k}(h)\|_{\ell^{\infty}} \leq 12 \cdot 2^{-l}  \|\omega\|_{L^{1}(W)} \ \ \text{and} \nonumber \\
\|Var \left(Y_{j,k}(g)- Y_{j,k}(h)\right)\|_{\ell^{\infty}} \leq 144 \cdot 2^{-2l} \|\omega\|^{2}_{L^{1}(W)}, \label{LemEq}
\end{eqnarray}
 for  $g \in \mathcal{E}_{l}$ and $h \in \mathcal{E}_{l-1}$ satisfying
$\|g-h\|_{L^{\infty}(\widetilde{K})} \leq 6 \cdot 2^{-l},$ $l \geq 2.$
For such $g$ and $h,$ once again applying the Bernstein's inequality and making use of the estimates obtained in \eqref{LemEq},  we get
\begin{eqnarray}
 && Prob \left(\left|\sum_{j=1}^{n} \sum_{k=1}^{m} \bigg(Y_{j,k}(g)- Y_{j,k}(h) \bigg) \right | \geq \frac{\mathfrak{p}}{2l^{2}} \right ) \nonumber \\
% && < 2 \exp \left( \dfrac{-(\frac{\mathfrak{p}}{2 l^{2}})^{2}}{2nm \|Var \left(Y_{j,k}(g)- Y_{j,k}(h)\right)\|_{\ell^{\infty}} %+\frac{2}{3} \|Y_{j,k}(g)- Y_{j,k}(h)\|_{\ell^{\infty}} \frac{\mathfrak{p}}{2l^{2}}}  \right ) \nonumber \\
 && < 2 \exp \left( \dfrac{-(\frac{\mathfrak{p}}{2 l^{2}})^{2}}{288 nm \cdot 2^{-2l} \|\omega\|^{2}_{L^{1}(W)} + 8  \cdot 2^{-l}  \|\omega\|_{L^{1}(W)} \frac{\mathfrak{p}}{2l^{2}}}  \right ) \nonumber \\
 && = 2 \exp \left( \dfrac{- 2^{l} \mathfrak{p}^{2}}{ 4l^{2} \left(288 nm  (l^{2} 2^{-l} ) \|\omega\|^{2}_{L^{1}(W)} + 4 \mathfrak{p} \|\omega\|_{L^{1}(W)}  \right)}  \right ) . \nonumber
\end{eqnarray}
As  $l^{2}2^{-l}\le\dfrac{9}{8}$  for $l \geq 2$, we get
\begin{eqnarray}
&& Prob \left (\left|\sum_{j=1}^{n} \sum_{k=1}^{m} \bigg(Y_{j,k}(g)- Y_{j,k}(h) \bigg) \right | \geq \frac{\mathfrak{p}}{2l^{2}} \right ) \nonumber \\
&& < 2 \exp \left( \dfrac{ - 2^{l} \mathfrak{p}^{2}}{ 16 l^{2} \left(81 nm   \|\omega\|^{2}_{L^{1}(W)} +  \mathfrak{p} \|\omega\|_{L^{1}(W)}  \right)}  \right ) \nonumber \\
&& = 2 \exp \left(- \frac{\nu 2^{l}}{l^{2}} \right), \label{eq23}
\end{eqnarray}
where $\nu = \dfrac{\mathfrak{p}^{2}}{ 16 \left(81nm   \|\omega\|^{2}_{L^{1}(W)} + \mathfrak{p} \|\omega\|_{L^{1}(W)}  \right)}. $\\
Furthermore, \eqref{ProbEl}, \eqref{LemEq1} and \eqref{eq23} together  give

\begin{eqnarray}
 Prob \left(\bigcup_{l=2}^{\infty}\textbf{E}_{l} \right) &< & 2 \left( \frac{4 \widetilde{C}_{\Phi}}{a_{1}}\right)^{2d} \sum_{l=2}^{\infty} \exp \left(2 l d \ln 2 - \frac{\nu 2^{l}}{l^{2}} \right) \nonumber \\
   & = & 2 \left( \frac{4 \widetilde{C}_{\Phi}}{a_{1}}\right)^{2d} \sum_{l=2}^{\infty} \exp \left( - \nu 2^{\frac{l}{2}}\left(\frac{2^{\frac{l}{2}}}{l^{2}}  - \frac{2ld\ln 2}{\nu 2^{\frac{l}{2}}} \right ) \right). \nonumber  \\ \label{eq24}
\end{eqnarray}
\noindent
It can be observed that $\displaystyle{ \min_{l \geq 2} \dfrac{2^{\frac{l}{2}}}{l^{2}}}= \dfrac{2}{9}$
 and
 $\displaystyle \max_{l \geq 2} \dfrac{l}{2^{\frac{l}{2}}}= \dfrac{3}{\sqrt{8}}$.
Thus, we obtain
\begin{equation*}
 \dfrac{2^{\frac{l}{2}}}{l^{2}} - \dfrac{2ld\ln 2 }{\nu 2^{\frac{l}{2}}} \nonumber  \geq  \dfrac{2}{9} - \dfrac{24  \sqrt{2}  d (\ln 2)  \left(81nm\|\omega\|^{2}_{L^{1}(W)}+\mathfrak{p} \|\omega\|_{L^{1}(W)} \right)}{\mathfrak{p}^{2}}.
\end{equation*}
Now, the assumption made on $\mathfrak{p}$ in the hypothesis gives
\begin{equation*}
 \mathfrak{p}^{2} - 2\left(108 \sqrt{2}  d(\ln 2)   \|\omega\|_{L^{1}(W)}\right) \mathfrak{p}  - 216 \times 81  \sqrt{2} d(\ln 2)   nm \|\omega\|^{2}_{L^{1}(W)}  > 0,
\end{equation*}
which implies that
\begin{equation}
\dfrac{2}{9} - \dfrac{24  \sqrt{2}  d (\ln 2)  \left(81nm\|\omega\|^{2}_{L^{1}(W)}+\mathfrak{p} \|\omega\|_{L^{1}(W)} \right)}{\mathfrak{p}^{2}} > \frac{1}{9}. \label{eq24.5}
\end{equation}
Hence, the above inequality together with \eqref{eq24} give
\begin{equation}
Prob \left(\bigcup_{l=2}^{\infty}\textbf{E}_{l} \right) <  2 \left( \frac{4 \widetilde{C}_{\Phi}}{a_{1}}\right)^{2d} \sum_{l=2}^{\infty} \exp \left( - \frac{\nu 2^{\frac{l}{2}}}{9} \right). \label{eq25}
\end{equation}
By making use of the  integral test,  we may write
\begin{equation*}
\sum_{l=2}^{\infty} \exp \left( - \frac{\nu 2^{\frac{l}{2}}}{9} \right) \leq \int_{1}^{\infty} e^{\tfrac{-\nu 2^{\frac{x}{2}}}{9}} dx \leq \dfrac{1}{\sqrt{2} \ln \sqrt{2}} \int_{\sqrt{2}}^{\infty} e^{\frac{-\nu t}{9}} dt = \dfrac{9 e^{\frac{-\sqrt{2} \nu }{9}}}{\sqrt{2} \nu  \ln \sqrt{2}}.
\end{equation*}
Also, from inequality \eqref{eq24.5}, we  obtain
\begin{equation*}
\nu = \dfrac{\mathfrak{p}^{2}}{ 16 \left(81nm   \|\omega\|^{2}_{L^{1}(W)} + \mathfrak{p} \|\omega\|_{L^{1}(W)}  \right)} > 27 \sqrt{2} d(\ln \sqrt{2}) ,
\end{equation*}
which further implies that
\begin{eqnarray*}
 \sum_{l=2}^{\infty} \exp \left( - \frac{\nu 2^{\frac{l}{2}}}{9} \right) &< & \dfrac{9 e^{\frac{-\sqrt{2} \nu }{9}}}{\sqrt{2}  (\ln \sqrt{2})\left(27 \sqrt{2} d(\ln \sqrt{2})  \right)}   \\
&=& \frac{2}{3 d(\ln 2)^{2} } \exp \left(  \dfrac{ - \mathfrak{p}^{2}}{72 \sqrt{2}\|\omega\|_{L^{1}(W)}\left(81 n m \|\omega\|_{L^{1}(W)}+  \mathfrak{p} \right)}\right).
\end{eqnarray*}
So, from this inequality  and \eqref{eq25}, it follows that
\begin{eqnarray*}
 Prob \left(\bigcup_{l=2}^{\infty}\textbf{E}_{l} \right) &< &  \dfrac{4}{3d(\ln 2)^{2} } \left(\frac{4 \widetilde{C}_{\Phi}}{a_{1}}\right)^{2d} \\
 && \times \exp \left( \dfrac{ - \mathfrak{p}^{2}}{72 \sqrt{2}\|\omega\|_{L^{1}(W)}(81 n m \|\omega\|_{L^{1}(W)}+  \mathfrak{p})}\right).
\end{eqnarray*}
Therefore, \eqref{ProbE1} and the above inequality together  prove \eqref{3eq10}.
\hfill{$\Box$}
\end{proof}

Now, we prove the sampling inequality for  the functions in $V^{p,q}_{K,\alpha}(\Phi,\theta).$ Towards this end, we consider the following subsets. For  $\zeta>0,$
$$V^{p,q}_{K}(\Phi, \zeta):=\left\{f\in  V_{K}(\Phi): \|f*\omega\|_{L^{p,q}(K)} \geq \zeta\right\} $$ and we denote the set of all normalized functions in this set by $V^{p,q,*}_{K}(\Phi, \zeta).$ In other words, $V^{p,q,*}_{K}(\Phi, \zeta)=V^{p,q}_{K}(\Phi, \zeta)\bigcap V^{*}_{K}(\Phi).$

\begin{thm}\label{Sampl_Ineq_1}
Let $ \omega, \Phi$ and $\rho$  satisfy the assumptions $(\text{A}_{1})- (\text{A}_{4})$ and $\{(u_{j}, v_{k})\}_{j,k \in \mathbb{N}}$  be a  sequence of i.i.d. random variables over $K$ with probability density function $\rho.$ Then for any $\gamma \in (0,1)$ and
\begin{eqnarray}
nm &>& \frac{108 \sqrt{2} (\ln 2) d \|\omega\|_{L^{1}(W)}\alpha^{2pq}}{\left(\gamma \mathcal{C}_{\rho, 1} \mu_1^{1-q} \mu_{2}^{1-p} \left(\frac{\widetilde{C}_{\Phi}}{a_{1}}\|\omega\|_{L^{1}(W)}\right)^{1-pq} \theta^{pq}\right)^{2}} \nonumber \\
&& \times \left(162 \|\omega\|_{L^{1}(W)} +  2 \gamma \mathcal{C}_{\rho, 1} \mu_1^{1-q} \mu_{2}^{1-p} \left(\frac{\widetilde{C}_{\Phi}}{a_{1}}\|\omega\|_{L^{1}(W)}\right)^{1-pq} \left(\frac{\theta}{\alpha}\right)^{pq} \right) \nonumber \\ \label{ThmNMvalue}
\end{eqnarray}
 with $\widetilde{C}_{\Phi}$ as in Lemma \ref{lemCphi}, the sampling inequality
 \begin{equation*}
 \widetilde{A}_{\frac{\theta}{\alpha}, \gamma} \|f\|_{L^{p,q}(\widetilde{K})} \leq  \left \|\{(f*\omega)(u_{j}, v_{k})\}_{\substack{j=1,2,\dots, n;\\ k=1,2,\dots, m}} \right\|_{\ell^{p,q}} \leq \widetilde{B}_{\frac{\theta}{\alpha}, \gamma} \|f\|_{L^{p,q}(\widetilde{K})}
 \end{equation*}
 holds with probability at least $1-\mathcal{A}_{1} e^{-nm \beta'_{\frac{\theta}{\alpha},\gamma}} -\mathcal{A}_{2} e^{-nm \beta''_{\frac{\theta}{\alpha},\gamma}},$ for every $f \in V^{p,q}_{K,\alpha}(\Phi,\theta),$ where for $\zeta>0$,
 \begin{eqnarray*}
 \widetilde{A}_{\zeta, \gamma} &=& (1-\gamma) \mathcal{C}_{\rho, 1} \mu_1^{1-q} \mu_{2}^{1-p} \left(\frac{\widetilde{C}_{\Phi}\|\omega\|_{L^{1}(W)}}{a_{1}}\right)^{(1-pq)} \zeta^{pq} n^{\frac{1}{p}} m^{\frac{1}{q}},
 \\
\widetilde{B}_{\zeta, \gamma} &=& \mathcal{C}_{\rho, 2}\mu_1^{\frac{p-1}{p}}\mu_2^{ \frac{q-1}{q} }\|\omega\|_{L^{1}(W)}\;nm \\
&& \quad \quad \quad \quad + \gamma \mathcal{C}_{\rho, 1}\mu_1^{1-q} \mu_2^{1-p} \left(\frac{\widetilde{C}_{\Phi}\|\omega\|_{L^{1}(W)}}{a_{1}}\right)^{(1-pq)} \zeta^{pq}\;nm,
\\
\beta'_{\zeta,\gamma} &=& \dfrac{\mu_1^{1-q} \mu_{2}^{1-p}\left(\frac{\sqrt{3}}{2}\gamma \mathcal{C}_{\rho, 1} \left(\frac{a_{1}\zeta}{\widetilde{C}_{\Phi}\|\omega\|_{L^{1}(W)}}\right )^{pq}\right)^{2}}{6\mu_1^{q-1} \mu_{2}^{p-1}+\gamma \mathcal{C}_{\rho, 1} \left(\frac{a_{1}\zeta}{\widetilde{C}_{\Phi}\|\omega\|_{L^{1}(W)}}\right )^{pq}},
 \\
\beta''_{\zeta,\gamma} &=&  \dfrac{\mu_1^{1-q} \mu_{2}^{1-p}\left(\gamma \mathcal{C}_{\rho, 1} \left(\frac{a_{1}\zeta}{\widetilde{C}_{\Phi}\|\omega\|_{L^{1}(W)}}\right )^{pq} \frac{\widetilde{C}_{\Phi}}{a_{1}}\right)^{2}}{72 \sqrt{2}\left(81\mu_1^{q-1} \mu_2^{p-1}+  \gamma \mathcal{C}_{\rho, 1} \left(\frac{a_{1}\zeta}{\widetilde{C}_{\Phi}\|\omega\|_{L^{1}(W)}}\right )^{pq}\frac{\widetilde{C}_{\Phi}}{a_{1}}\right)}
 \end{eqnarray*}
and  $\mathcal{A}_{1}, \mathcal{A}_{2}$ are as in Lemma \ref{lemm3.3}.
\end{thm}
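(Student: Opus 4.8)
The plan is to reduce everything to normalized functions and then transfer the concentration estimate of Lemma \ref{lemm3.3} through two pairs of norm comparisons --- one relating the continuous average $\int_K\rho\,|(f\ast\omega)|$ to the mixed-norm $\|f\ast\omega\|_{L^{p,q}(K)}$, and one relating the discrete $\ell^{p,q}$-norm of the samples to their $\ell^1$-sum. Given $f\in V^{p,q}_{K,\alpha}(\Phi,\theta)$, I set $\widehat f:=f/\|f\|_{L^{p,q}(\widetilde K)}$, so that $\widehat f\in V^*_K(\Phi)$ and $\|\widehat f\ast\omega\|_{L^{p,q}(K)}\ge\theta/\alpha=:\zeta$; since both sides of the claimed inequality are homogeneous of degree one in $f$, it suffices to prove them for $\widehat f$ and then multiply through by $\|f\|_{L^{p,q}(\widetilde K)}$. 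Writing $C^{*}:=\tfrac{\widetilde C_\Phi}{a_1}\|\omega\|_{L^1(W)}$, from \eqref{RandVar} one has $\sum_{j,k}Y_{j,k}(\widehat f)=\sum_{j,k}|(\widehat f\ast\omega)(u_j,v_k)|-nm\int_K\rho\,|(\widehat f\ast\omega)|$, so controlling $|\sum_{j,k}Y_{j,k}(\widehat f)|$ sandwiches the sample sum between $nm\int_K\rho\,|(\widehat f\ast\omega)|\pm\mathfrak p$. I would choose $\mathfrak p:=\gamma\,nm\,L$, where $L$ is the lower bound for the average obtained below.

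The heart of the argument is a mixed-Lebesgue interpolation inequality on the compact box $K=K_1\times K_2$: for any $h\in L^\infty(K)$,
\begin{equation*}
\|h\|_{L^{p,q}(K)}^{pq}\le\mu_1^{q-1}\mu_2^{p-1}\,\|h\|_{L^\infty(K)}^{pq-1}\,\|h\|_{L^1(K)}.
\end{equation*}
I would prove it by first bounding the inner integral $\int_{K_2}|h|^q\le\|h\|^{q-1}_{L^\infty(K)}\int_{K_2}|h|$, and then estimating $\bigl(\int_{K_1}(\int_{K_2}|h|)^{p/q}\bigr)^{q}$ in terms of $\int_{K_1}\int_{K_2}|h|$ --- splitting into the cases $p\ge q$ (using $\int_{K_2}|h|\le\mu_2\|h\|_{L^\infty(K)}$ on the surplus power) and $p<q$ (using Jensen's inequality for the concave map $t\mapsto t^{p/q}$). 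Applying this to $h=\widehat f\ast\omega$, together with $\|\widehat f\ast\omega\|_{L^\infty(K)}\le C^{*}$ (from Lemma \ref{lemCphi} and \eqref{mixYoung2}) and $\|\widehat f\ast\omega\|_{L^{p,q}(K)}\ge\zeta$, and using $\rho\ge\mathcal C_{\rho,1}$, yields the lower average bound $\int_K\rho\,|(\widehat f\ast\omega)|\ge\mathcal C_{\rho,1}\mu_1^{1-q}\mu_2^{1-p}(C^{*})^{1-pq}\zeta^{pq}=:L$. For the upper average bound I would use $\rho\le\mathcal C_{\rho,2}$, the elementary Hölder estimate $\|h\|_{L^1(K)}\le\mu_1^{(p-1)/p}\mu_2^{(q-1)/q}\|h\|_{L^{p,q}(K)}$, and $\|\widehat f\ast\omega\|_{L^{p,q}(K)}\le\|\omega\|_{L^1(W)}$ (from \eqref{mixYoung1} and $\|\widehat f\|_{L^{p,q}(\widetilde K)}=1$). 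On the discrete side I would record, via iterated Hölder inequalities, the two comparisons $n^{1/p}m^{1/q}(nm)^{-1}\|c\|_{\ell^1}\le\|c\|_{\ell^{p,q}}\le\|c\|_{\ell^1}$ for finite arrays $c=(c_{j,k})_{j\le n,\,k\le m}$.

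Finally I would assemble these pieces on the event $\{\sup_{g\in V^*_K(\Phi)}|\sum_{j,k}Y_{j,k}(g)|<\mathfrak p\}$, whose complement Lemma \ref{lemm3.3} bounds by the two exponential terms in \eqref{3eq10}. On this event the sample sum for $\widehat f$ exceeds $nmL-\mathfrak p=(1-\gamma)nmL$, which after the discrete lower comparison gives $\widetilde A_{\zeta,\gamma}$; dually it is at most $nm\int_K\rho\,|(\widehat f\ast\omega)|+\mathfrak p$, which after the discrete upper comparison gives $\widetilde B_{\zeta,\gamma}$. The remaining, and most delicate, task is the bookkeeping. I must verify that $\mathfrak p=\gamma\,nm\,L$ meets the hypothesis of Lemma \ref{lemm3.3}, and this is exactly where \eqref{ThmNMvalue} enters: squaring $\gamma nmL>108\sqrt2(\ln2)d\|\omega\|_{L^1(W)}\bigl(1+(1+\tfrac{3nm}{2\sqrt2(\ln2)d})^{1/2}\bigr)$ and solving for $nm$ reproduces \eqref{ThmNMvalue}, after using $\tfrac{3P}{D}=162\|\omega\|_{L^1(W)}$ with $P=108\sqrt2(\ln2)d\|\omega\|_{L^1(W)}$ and $D=2\sqrt2(\ln2)d$. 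I would then substitute $\mathfrak p=\gamma nmL$ and $L=\mathcal C_{\rho,1}\mu_1^{1-q}\mu_2^{1-p}(C^{*})^{1-pq}\zeta^{pq}$ into the two exponents of \eqref{3eq10} and simplify to identify them with $nm\,\beta'_{\zeta,\gamma}$ and $nm\,\beta''_{\zeta,\gamma}$. These cancellations are routine but must be carried out with care, and this constant-matching --- rather than any conceptual step --- is the main obstacle.
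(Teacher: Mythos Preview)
Your proposal is correct and follows essentially the same route as the paper's proof: normalize, use Lemma~\ref{lemm3.3} with $\mathfrak p=\gamma\,nm\,L$, sandwich the sample sum via assumption $(\text{A}_4)$, and convert between $\ell^1$ and $\ell^{p,q}$ norms exactly as you describe. The one technical difference is the interpolation inequality $\|h\|_{L^{p,q}(K)}^{pq}\le\mu_1^{q-1}\mu_2^{p-1}\|h\|_{L^\infty(K)}^{pq-1}\|h\|_{L^1(K)}$: the paper obtains it without a case split, by first applying H\"older in the $K_1$-integral, then Minkowski's integral inequality to swap the order, and a second H\"older in $K_2$. Your case-split argument works too, but note that after bounding $\bigl(\int_{K_1}(\int_{K_2}|h|)^{p/q}\bigr)^q$ you are still left with $\|h\|_{L^1(K)}^{q}$ (respectively $\|h\|_{L^1(K)}^{p}$), and you need one more use of $\|h\|_{L^1(K)}\le\mu_1\mu_2\|h\|_{L^\infty(K)}$ on the surplus power to reach the stated exponents --- a step your sketch omits but which is straightforward to supply.
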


 \begin{proof}
We shall first derive a sampling inequality for functions in  $V^{p,q,*}_{K}(\Phi, \zeta).$
Let  $Y_{j,k}$  be as defined  in \eqref{RandVar}. Consider the event
 $$\displaystyle \textbf{E}= \left\{\sup_{f \in V^{p,q,*}_{K}(\Phi, \zeta)}\left|\sum_{j=1}^{n}\sum_{k=1}^{m} Y_{j,k}(f) \right| > \mathfrak{p} \right \}$$ and its complement
   %of $\textbf{E},$
   $\displaystyle{\textbf{E}^{c}= \left\{-\mathfrak{p} \leq \sum_{j=1}^{n}\sum_{k=1}^{m} Y_{j,k}(f)  \leq \mathfrak{p}, \forall f \in V^{p,q,*}_{K}(\Phi, \zeta) \right \}},$
   for the random variable
 $\displaystyle\sup_{f \in V^{p,q,*}_{K}(\Phi, \zeta)}\left|\sum_{j=1}^{n}\sum_{k=1}^{m} Y_{j,k}(f) \right|$
 and a positive constant $\mathfrak{p}.$
   Using \eqref{RandVar}, we rewrite $\textbf{E}^{c}$ as
   \begin{eqnarray*}
  && \textbf{E}^{c} = \bigg\{ nm \int_{K} \rho(u,v)|(f*\omega)(u,v)|du dv -\mathfrak{p} \leq \sum_{j=1}^{n}\sum_{k=1}^{m} |(f*\omega)(u_{j}, v_{k})| \nonumber \\
   && \qquad \qquad \qquad  \leq  nm \int_{K} \rho(u,v)|(f*\omega)(u,v)|du dv  + \mathfrak{p}, \forall f \in V^{p,q,*}_{K}(\Phi, \zeta) \bigg \}.
   \end{eqnarray*}
 Suppose we assume that    $\textbf{E}^{c}$ holds true. Let $f\in  V^{p,q,*}_{K}(\Phi, \zeta).$
   Applying the Hölder's  inequality, it is easy to see that
   \begin{equation}
   \sum_{j=1}^{n}\sum_{k=1}^{m} |(f*\omega)(u_{j}, v_{k})|  \leq m^{\frac{q-1}{q}} n^{\frac{p-1}{p}} \left \|\{(f*\omega)(u_{j}, v_{k})\}_{\substack{j=1,2,\dots, n;\\ k=1,2,\dots, m}} \right\|_{\ell^{p,q}}. \label{eq30}
   \end{equation}
   For $p,q >1$, we obtain
 \begin{equation*}
  \sum_{k=1}^{m}|(f*\omega)(u_{j}, v_{k})|^{q}  \leq  \left(\sum_{k=1}^{m}|(f*\omega)(u_{j}, v_{k})| \right )^{q} \end{equation*}
  which further implies
  \begin{eqnarray*}
 \sum_{j=1}^{n} \left(\sum_{k=1}^{m}|(f*\omega)(u_{j}, v_{k})|^{q} \right )^{\frac{p}{q}} &\leq &  \sum_{j=1}^{n} \left(\sum_{k=1}^{m}|(f*\omega)(u_{j}, v_{k})| \right )^{p}  \\
 &\leq & \left( \sum_{j=1}^{n} \sum_{k=1}^{m}|(f*\omega)(u_{j}, v_{k})| \right )^{p}.
 \end{eqnarray*}
 So,
 \begin{equation*}
 \bigg \|\{(f*\omega)(u_{j}, v_{k})\}_{\substack{j=1,2,\dots, n;\\ k=1,2,\dots, m}} \bigg \|_{\ell^{p,q}}  \leq \sum_{j=1}^{n} \sum_{k=1}^{m} \big|(f \ast \omega)(u_{j}, v_{k})\big|.
 \end{equation*}
Combining the above inequality and \eqref{eq30}, we obtain
\begin{eqnarray}
 && \left \|\{(f*\omega)(u_{j}, v_{k})\}_{\substack{j=1,2,\dots, n;\\ k=1,2,\dots, m}} \right \|_{\ell^{p,q}} \nonumber \\
&& \quad \leq  \sum_{j=1}^{n}\sum_{k=1}^{m} |(f*\omega)(u_{j}, v_{k})| \nonumber \\
&&\quad \quad \leq m^{\frac{q-1}{q}} n^{\frac{p-1}{p}}
  \left \|\{(f*\omega)(u_{j}, v_{k})\}_{\substack{j=1,2,\dots, n;\\ k=1,2,\dots, m}} \right\|_{\ell^{p,q}}. \label{eq31}
\end{eqnarray}
\vskip 1em
\noindent
 As $\textbf{E}^{c}$ holds,  the above inequality together with the assumption $(\text{A}_{4})$ give
\begin{eqnarray}
 \bigg \|\{(f*\omega)(u_{j}, v_{k})\}_{\substack{j=1,2,\dots, n;\\ k=1,2,\dots, m}} \bigg \|_{\ell^{p,q}} & \leq & \sum_{j=1}^{n} \sum_{k=1}^{m} \big|(f \ast \omega)(u_{j}, v_{k})\big| \nonumber \\
& \leq & nm  \int_{K}\rho(u,v)\big|(f \ast \omega)(u, v)\big| du dv + \mathfrak{p} \nonumber \\
& \leq & nm \mathcal{C}_{\rho, 2} \|f*\omega\|_{L^{1}(K)} + \mathfrak{p}. \label{eq32}
\end{eqnarray}
Now, by applying  Hölder's  inequality twice and subsequently \eqref{mixYoung1}, it follows that
\begin{eqnarray}
 \|f*\omega\|_{L^{1}(K)} &=& \int_{K_{1}} \int_{K_{2}} |(f*\omega)(u,v)|du dv  \nonumber \\
&\leq& \mu_2^{\frac{q-1}{q}} \int_{K_{1}}\bigg(\int_{K_{2}}|(f*\omega)(u,v)|^{q} dv \bigg)^{\frac{1}{q}}du   \nonumber \\
& \leq & \mu_2^{\frac{q-1}{q}} \mu_1^{\frac{p-1}{p}} \|f*\omega\|_{L^{p,q}(K)} \nonumber \\
&\leq & \mu_2^{ \frac{q-1}{q} } \mu_1^{\frac{p-1}{p} } \|\omega\|_{L^{1}(W)}.\label{eq_1norm_conv}
\end{eqnarray}
Thus, from  \eqref{eq32}, we obtain
\begin{equation}
 \left \|\{(f*\omega)(u_{j}, v_{k})\}_{\substack{j=1,2,\dots, n;\\ k=1,2,\dots, m}} \right \|_{\ell^{p,q}} \leq nm \mathcal{C}_{\rho, 2} \mu_1^{\frac{p-1}{p} } \mu_2^{ \frac{q-1}{q}}  \|\omega\|_{L^{1}(W)} + \mathfrak{p}. \label{eq34}
\end{equation}
On the other hand, using the upper inequality of \eqref{eq31}, we get
\begin{eqnarray}
&& \bigg \|\{(f*\omega)(u_{j}, v_{k})\}_{\substack{j=1,2,\dots, n;\\ k=1,2,\dots, m}} \bigg \|_{\ell^{p,q}} \nonumber \\
&& \quad \geq n^{\frac{1-p}{p}} m^{\frac{1-q}{q}}  \sum_{j=1}^{n} \sum_{k=1}^{m} \big|(f \ast \omega)(u_{j}, v_{k})\big| \nonumber \\
&& \quad \geq n^{\frac{1-p}{p}} m^{\frac{1-q}{q}} \left(nm  \int_{K}\rho(u,v)\big|(f \ast \omega)(u, v)\big| du dv - \mathfrak{p} \right) \nonumber \\
&& \quad \geq n^{\frac{1-p}{p}} m^{\frac{1-q}{q}} \left(nm \mathcal{C}_{\rho,1} \int_{K}\big|(f \ast \omega)(u, v)\big| du dv - \mathfrak{p} \right). \label{eq35}
\end{eqnarray}
In order to estimate  $\|f*\omega\|_{L^{1}(K)},$ we make  use of the  Hölder's  inequality and obtain
\begin{eqnarray*}
 \|f*\omega\|_{L^{p,q}(K)} & = & \left(\int_{K_{1}} \left(\int_{K_{2}} |(f*\omega)(u,v)|^{q} dv\right)^{\frac{p}{q}}du \right)^{\frac{1}{p}}\nonumber \\
&  \leq &\mu_1^{\frac{q-1}{pq} } \bigg ( \int_{K_{1}} \bigg( \int_{K_{2}} |(f* \omega)(u,v)|^{q}dv  \bigg)^{p}du \bigg)^{\frac{1}{pq}} \nonumber  \\
&  \leq & \mu_1^{\frac{q-1}{pq}} \|f*\omega\|_{L^{ \infty}(K)}^{\frac{q-1}{q} }  \bigg ( \int_{K_{1}} \bigg( \int_{K_{2}} |(f* \omega)(u,v)|dv \bigg)^{p} du   \bigg)^{\frac{1}{pq}},
\end{eqnarray*}
which in turn, by  Minkowski's integral inequality, leads to
\begin{eqnarray*}
 \|f*\omega\|_{L^{p,q}(K)} &\leq &\mu_1^{\frac{q-1}{pq} } \|f*\omega\|_{L^{ \infty}(K)}^{\frac{q-1}{q} }  \bigg( \int_{K_{2}} \bigg( \int_{K_{1}} |(f* \omega)(u,v)|^{p}du  \bigg)^{\frac{1}{p}}dv  \bigg)^{\frac{1}{q}}  \\
&\leq &\mu_1^{\frac{q-1}{pq}} \|f*\omega\|_{L^{\infty}(K)}^{1-\frac{1}{pq}}  \bigg( \int_{K_{2}} \bigg( \int_{K_{1}} |(f* \omega)(u,v)|du  \bigg)^{\frac{1}{p}}dv  \bigg)^{\frac{1}{q}}.
\end{eqnarray*}
Once again using the Hölder's  inequality, we get
\begin{equation*}
 \|f*\omega\|_{L^{p,q}(K)}  \leq  \mu_1^{\frac{q-1}{pq}}\mu_2^{\frac{p-1}{pq}} \|f*\omega\|_{L^{\infty}(K)}^{1-\frac{1}{pq}}  \|f*\omega\|_{L^{1}(K)}^{\frac{1}{pq}}.
\end{equation*}
Furthermore, by  \eqref{mixYoung2} and  Lemma  \ref{lemCphi}, we obtain
\begin{equation*}
 \|f*\omega\|_{L^{p,q}(K)}  \leq  \mu_1^{\frac{q-1}{pq}} \mu_2^{\frac{p-1}{pq}} \left(\frac{\widetilde{C}_{\Phi}}{a_{1}} \|\omega\|_{L^{1}(W)} \right)^{1- \frac{1}{pq} } \|f*\omega\|_{L^{1}(K)}^{\frac{1}{pq}}.
\end{equation*}
Thus,
\begin{eqnarray*}
 \|f*\omega\|_{L^{1}(K)}
&\geq &\mu_1^{1-q} \mu_2^{1-p} \left(\frac{\widetilde{C}_{\Phi}}{a_{1}} \|\omega\|_{L^{1}(W)} \right)^{1-pq} \|f*\omega\|_{L^{p,q}(K)}^{pq}\\
&\geq &  \mu_1^{1-q} \mu_2^{1-p} \left(\frac{\widetilde{C}_{\Phi}}{a_{1}} \|\omega\|_{L^{1}(W)} \right)^{1-pq} \zeta^{pq},
\end{eqnarray*}
as  $f\in V^{p,q,*}_{K}(\Phi,\zeta).$
Therefore, from \eqref{eq35}, we get
\begin{eqnarray*}
&& \bigg \|\{(f*\omega)(u_{j}, v_{k})\}_{\substack{j=1,2,\dots, n;\\ k=1,2,\dots, m}} \bigg \|_{\ell^{p,q}}  \\
&& \geq n^{\frac{1-p}{p}} m^{\frac{1-q}{q}} \left(nm  \mathcal{C}_{\rho,1}\mu_1^{1-q} \mu_2^{1-p} \left(\frac{\widetilde{C}_{\Phi}}{a_{1}}  \|\omega\|_{L^{1}(W)}\right) ^{1-pq} \zeta^{pq}- \mathfrak{p} \right).
\end{eqnarray*}
In the presence of the above inequality and \eqref{eq34}, it is easy to observe that the event $\textbf{E}^{c} \subset \widetilde{\textbf{E}},$ where
\begin{eqnarray*}
&& \widetilde{\textbf{E}} = \bigg \{
n^{\frac{1-p}{p}} m^{\frac{1-q}{q}} \left(nm  \mathcal{C}_{\rho,1}\mu_1^{1-q} \mu_2^{1-p} \left(\frac{\widetilde{C}_{\Phi}}{a_{1}}  \|\omega\|_{L^{1}(W)}\right) ^{1-pq} \zeta^{pq}- \mathfrak{p} \right)\\
&& \quad \quad \quad  \leq \left \|\{(f*\omega)(u_{j}, v_{k})\}_{\substack{j=1,2,\dots, n;\\ k=1,2,\dots, m}} \right \|_{\ell^{p,q}} \\
&& \quad \quad \quad \quad \quad  \leq
 nm \mathcal{C}_{\rho, 2} \mu_1^{\frac{p-1}{p} } \mu_2^{ \frac{q-1}{q}}  \|\omega\|_{L^{1}(W)} + \mathfrak{p},
%nm \frac{\mathcal{C}_{\rho, 2} \|\omega\|_{L^{1}(K)}}{ \mu_1)^{\left(\frac{1-p}{p} %\right)} (\textbf{m}(K_{2}))^{ \left(\frac{1-q}{q} \right)}}  + \mathfrak{p}  , \\
\ \ \forall \ f\in V^{p,q,*}_{K}(\Phi, \zeta) \bigg \}.
\end{eqnarray*}
 The choice of
 $$\mathfrak{p}=nm  \gamma \mathcal{C}_{\rho,1}\mu_1^{1-q} \mu_2^{1-p}\left(\frac{\widetilde{C}_{\Phi}}{a_{1}} \|\omega\|_{L^{1}(W)} \right)^{1-pq} \zeta^{pq}$$
in  the inequalities present  in the event $\widetilde{\textbf{E}}$
results in the sampling inequality
\begin{equation}
 \widetilde{A}_{\zeta, \gamma}  \leq  \left \|\{(f*\omega)(u_{j}, v_{k})\}_{\substack{j=1,2,\dots, n;\\ k=1,2,\dots, m}} \right\|_{\ell^{p,q}} \leq \widetilde{B}_{\zeta, \gamma}\quad\forall\,f\in V^{p,q,*}_{K}(\Phi, \zeta),  \label{eqnnew}
 \end{equation}
 which holds with probability atleast  $1-Prob(\textbf{E}),$ with  $\mathfrak{p}$ in $\textbf{E}$  as chosen above.

\vskip 1em
Now, for analysing  the probability, suppose
\begin{equation}
nm > \frac{54 N_{1} \|\omega\|_{L^{1}(W)}}{N_{2}^{2}} (162 \|\omega\|_{L^{1}(W)}+2N_{2}), \label{eq36}
\end{equation}
where
\begin{eqnarray*}
N_{1} &=& 2\sqrt{2}(\ln 2)d \ \ \ \text{and} \\
N_{2} &=&  \gamma \mathcal{C}_{\rho, 1} \mu_1^{1-q} \mu_{2}^{1-p} \left(\frac{\widetilde{C}_{\Phi}}{a_{1}}\|\omega\|_{L^{1}(W)}\right)^{1-pq} \zeta^{pq} .
\end{eqnarray*}
Then, the  constant  $\mathfrak{p}$ satisfies the hypothesis of Lemma \ref{lemm3.3}, which we shall apply in order to estimate $Prob(\textbf{E})$. More precisely,  \eqref{eq36} holds
\begin{eqnarray*}
&\text{if and only if} & N_{2}^{2}(nm)^{2} > \left(108 N_{1} N_{2} \|\omega\|_{L^{1}(W)} + \frac{3}{N_{1}} \left(54 N_{1} \|\omega\|_{L^{1}(W)} \right)^{2} \right) nm \\
 &\text{if and only if}& \left(N_{2} nm - 54 N_{1} \|\omega\|_{L^{1}(W)} \right)^{2} > \left(54 N_{1} \|\omega\|_{L^{1}(W)} \right)^{2} \left(1+\frac{3nm}{N_{1}} \right) \\
 &\text{if and only if}& N_{2} nm > 54 N_{1} \|\omega\|_{L^{1}(W)} \left (1 + \left(1 + \frac{3nm}{N_{1}}\right)^{\frac{1}{2}} \right) \\
 &\text{if and only if}& \mathfrak{p} > 108  \sqrt{2} (\ln 2) d \left(1+ \bigg(1+\dfrac{3nm}{2  \sqrt{2} (\ln 2) d} \bigg)^{\frac{1}{2}} \right )\|\omega\|_{L^{1}(W)}.
\end{eqnarray*}
Hence, for $f \in V_{K}^{p,q,*}(\Phi, \zeta),$ the sampling inequality \eqref{eqnnew} holds with probability atleast
$ 1-\mathcal{A}_{1} \exp \left(-nm \beta'_{\zeta,\gamma}\right) - \mathcal{A}_{2} \exp \left(-nm\beta''_{\zeta,\gamma}\right),$
where $\mathcal{A}_{1} , \mathcal{A}_{2}$ are as in Lemma \ref{lemm3.3} and  $\beta'_{\zeta,\gamma},\beta''_{\zeta,\gamma}$ are as in the  hypothesis.

\vskip 1em

 Now, for $g\in V^{p,q}_{K,\alpha}(\Phi,\theta),$ we have  $ \widetilde{g}:=\frac{g}{\|g\|_{L^{p,q}(\widetilde{K})}}\in V^{p,q,*}_{K}(\Phi,\frac{\theta}{\alpha}).$
 Further, by the condition  on $n$ and $m$ given by \eqref{ThmNMvalue}, we observe that  \eqref{eq36} holds with $\zeta=\frac{\theta}{\alpha} $ in $N_{2}.$
 So by \eqref{eqnnew}, the sampling inequality
 \begin{equation}
 \widetilde{A}_{\frac{\theta}{\alpha}, \gamma} \|g\|_{L^{p,q}(\widetilde{K})} \leq  \left \|\{(g*\omega)(u_{j}, v_{k})\}_{\substack{j=1,2,\dots, n;\\ k=1,2,\dots, m}} \right\|_{\ell^{p,q}} \leq \widetilde{B}_{\frac{\theta}{\alpha}, \gamma} \|g\|_{L^{p,q}(\widetilde{K})} \nonumber \\
 \end{equation}
  holds with probability  at least $1-\mathcal{A}_{1} e^{-nm \beta'_{\frac{\theta}{\alpha},\gamma}} -\mathcal{A}_{2} e^{-nm \beta''_{\frac{\theta}{\alpha},\gamma}},$ thereby proving the theorem.
\hfill{$\Box$}
 \end{proof}
%\begin{thm}\textbf{Sampling inequality}
%\end{thm}

%\section{Reconstruction algorithm} \label{sec4}
%Finally, the reconstruction algorithm will be introduced for the functions in the subset of finite dimensional shift invariant subspace of %$L^{p,q}(G_{1}\times G_{2}).$\\

%\begin{thm}
%\textbf{Reconstruction formula }
%\end{thm}
The sampling inequality for the subset $V_{K}(\Phi,\omega,\mu)$ of $V_K(\Phi)$ is discussed in the theorem below.
\begin{thm}\label{Sampl_Ineq_2}
Suppose $\omega,\,\Phi,\,\rho$ and the i.i.d random variables $\{(u_j,v_k)\}_{j,k\in\mathbb{N}}$ over $K$ be as in  Theorem \ref{Sampl_Ineq_1}. Let $0<\mu\leq 1$ and $0<\eta<\mu \mathcal{C}_{\rho,1}$. Taking the number of samples $nm$ more than $\tfrac{108\sqrt{2}(\ln 2)d}{\eta}\left(2+\frac{162}{\eta}\right)$, the following sampling inequality holds for a function $f\in V_{K}(\Phi,\omega,\mu)$\emph{:}
\begin{align*}
&nm\|\omega\|_{L^1(W)}\left(\mu\mathcal{C}_{\rho,1}-\eta\right)\|f\|_{L^{p,q}(\widetilde{K})}\\
&\quad\leq\sum_{j=1}^{n}\sum_{k=1}^{m}|(f\ast\omega)(u_j,v_k)|\\
&\quad\quad\leq nm\|\omega\|_{L^1(W)}\left(\mathcal{C}_{\rho,2}\mu_{1}^{\frac{p-1}{p}}\mu_{2}^{\frac{q-1}{q}}+\eta\right)\|f\|_{L^{p,q}(\widetilde{K})}
\end{align*}
with probability atleast
\[1-\mathcal{A}_1exp\left(-nm\frac{3a_1^2\eta^2}{4\widetilde{C}_{\Phi}(6\widetilde{C}_{\Phi}+\eta a_1)}\right)-\mathcal{A}_2exp\left(-nm\frac{\eta^2}{72\sqrt{2}(81+\eta)}\right),\]
where $\mathcal{A}_1, \mathcal{A}_2$ and $\widetilde{C}_{\Phi}$ are as in Lemma \ref{lemm3.3}.
\end{thm}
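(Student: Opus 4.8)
The plan is to reduce everything, exactly as in the proof of Theorem \ref{Sampl_Ineq_1}, to the uniform deviation estimate of Lemma \ref{lemm3.3}. First I would normalize: for a nonzero $f\in V_{K}(\Phi,\omega,\mu)$, set $\widetilde{f}:=f/\|f\|_{L^{p,q}(\widetilde{K})}\in V_{K}^{*}(\Phi)$. Since both convolution and the absolute value are positively homogeneous, the sample sum $\sum_{j,k}|(f\ast\omega)(u_j,v_k)|$ and the integral $\int_{K}\rho|(f\ast\omega)|$ scale by $\|f\|_{L^{p,q}(\widetilde{K})}$, so it suffices to prove the two-sided bound for $\widetilde{f}$ and then multiply through by $\|f\|_{L^{p,q}(\widetilde{K})}$. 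Dividing the defining inequality of $V_{K}(\Phi,\omega,\mu)$ by $\|f\|_{L^{p,q}(\widetilde{K})}$ shows $\|\widetilde{f}\ast\omega\|_{L^{1}(K)}\geq\mu\|\omega\|_{L^{1}(W)}$, which I will use for the lower bound.

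Next, using the definition \eqref{RandVar} of $Y_{j,k}$, I would write
\[
\sum_{j=1}^{n}\sum_{k=1}^{m}|(\widetilde{f}\ast\omega)(u_j,v_k)| = \sum_{j=1}^{n}\sum_{k=1}^{m}Y_{j,k}(\widetilde{f}) + nm\int_{K}\rho(u,v)|(\widetilde{f}\ast\omega)(u,v)|\,du\,dv.
\]
On the complement $\textbf{E}^{c}$ of the event in Lemma \ref{lemm3.3}, the first term is bounded in absolute value by $\mathfrak{p}$ uniformly over $\widetilde{f}\in V_{K}^{*}(\Phi)$. For the integral term I would bound $\rho$ using the two sides of $(\text{A}_{4})$: the lower estimate becomes $\mathcal{C}_{\rho,1}\|\widetilde{f}\ast\omega\|_{L^{1}(K)}\geq\mathcal{C}_{\rho,1}\mu\|\omega\|_{L^{1}(W)}$ by membership in $V_{K}(\Phi,\omega,\mu)$, while the upper estimate becomes $\mathcal{C}_{\rho,2}\|\widetilde{f}\ast\omega\|_{L^{1}(K)}\leq\mathcal{C}_{\rho,2}\mu_{1}^{(p-1)/p}\mu_{2}^{(q-1)/q}\|\omega\|_{L^{1}(W)}$ by the H\"older/Young computation already carried out in \eqref{eq_1norm_conv}. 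Choosing $\mathfrak{p}=nm\,\eta\,\|\omega\|_{L^{1}(W)}$ then produces precisely the coefficients $\mu\mathcal{C}_{\rho,1}-\eta$ and $\mathcal{C}_{\rho,2}\mu_{1}^{(p-1)/p}\mu_{2}^{(q-1)/q}+\eta$ of the theorem after restoring the factor $\|f\|_{L^{p,q}(\widetilde{K})}$; the hypothesis $\eta<\mu\mathcal{C}_{\rho,1}$ is exactly what keeps the lower constant positive.

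It remains to control $Prob(\textbf{E})$, for which I must check that $\mathfrak{p}=nm\,\eta\,\|\omega\|_{L^{1}(W)}$ satisfies the hypothesis of Lemma \ref{lemm3.3}. Writing $N_{1}=2\sqrt{2}(\ln 2)d$, this amounts to $nm\,\eta>54N_{1}\bigl(1+(1+3nm/N_{1})^{1/2}\bigr)$; isolating the square root, squaring, and dividing by $nm$ turns it into $nm>\tfrac{2\cdot 54N_{1}}{\eta}+\tfrac{3(54N_{1})^{2}}{N_{1}\eta^{2}}$, which simplifies to exactly the stated bound $nm>\tfrac{108\sqrt{2}(\ln 2)d}{\eta}\bigl(2+\tfrac{162}{\eta}\bigr)$. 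Finally I would substitute $\mathfrak{p}=nm\,\eta\,\|\omega\|_{L^{1}(W)}$ into the right-hand side of \eqref{3eq10}: in each exponent the common factors $nm\,\|\omega\|_{L^{1}(W)}^{2}$ cancel, collapsing them to $-nm\,\tfrac{3a_{1}^{2}\eta^{2}}{4\widetilde{C}_{\Phi}(6\widetilde{C}_{\Phi}+\eta a_{1})}$ and $-nm\,\tfrac{\eta^{2}}{72\sqrt{2}(81+\eta)}$, which are the exponents claimed. I expect the one delicate point to be this algebraic verification that the sample-count hypothesis is equivalent to the hypothesis of Lemma \ref{lemm3.3}, mirroring the chain of equivalences following \eqref{eq36}; every other step is a direct specialization of the machinery already in place.
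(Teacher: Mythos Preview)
Your proposal is correct and follows essentially the same route as the paper's own proof: normalize to $\widetilde{f}\in V_{K}^{*}(\Phi)$, rewrite the sample sum via \eqref{RandVar}, bound the integral term below and above using $(\text{A}_{4})$ together with the defining inequality of $V_{K}(\Phi,\omega,\mu)$ and \eqref{eq_1norm_conv}, choose $\mathfrak{p}=nm\,\eta\,\|\omega\|_{L^{1}(W)}$, and then verify that the sample-count hypothesis is equivalent to the threshold in Lemma \ref{lemm3.3} before substituting into \eqref{3eq10}. Your algebraic verification of the equivalence and of the exponent simplifications is accurate and matches the paper's computation.
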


\begin{proof}
Consider $f\in V_{K}(\Phi,\omega,\mu)$ and $\tilde{f}=\frac{f}{\|f\|_{L^{p,q}(\widetilde{K})}}$. If we assume that the random variables $Y_{j,k}(\tilde{f})$, given by \eqref{RandVar}, satisfy
$$\left|\sum_{j=1}^{n}\sum_{k=1}^{m}Y_{j,k}(\tilde{f})\right|\leq nm\eta\|\omega\|_{L^{1}(W)}$$ with $n,m\in\mathbb{N}$ as in the hypothesis, then we have
\begin{align*}
&nm\left(\int_{K}\rho(u,v)|(\tilde{f}\ast\omega)(u,v)|dudv-\eta\|\omega\|_{L^{1}(W)}\right)\\
&\quad\leq\sum_{j=1}^{n}\sum_{k=1}^{m}|(\tilde{f}\ast\omega)(u_j,v_k)|\\
&\quad\quad\leq nm\left(\int_{K}\rho(u,v)|(\tilde{f}\ast\omega)(u,v)|dudv+\eta\|\omega\|_{L^{1}(W)}\right).
\end{align*}
Further, by the assumption $(A_4)$ on $\rho$, we obtain
\begin{align*}
&nm\left(\mathcal{C}_{\rho,1}\|\tilde{f}\ast\omega\|_{L^{1}(K)}-\eta\|\omega\|_{L^{1}(W)}\right)\\
&\quad\leq\sum_{j=1}^{n}\sum_{k=1}^{m}|(\tilde{f}\ast\omega)(u_j,v_k)|\\
&\quad\quad\leq nm\left(\mathcal{C}_{\rho,2}\|\tilde{f}\ast\omega\|_{L^{1}(K)}+\eta\|\omega\|_{L^{1}(W)}\right).
\end{align*}
Now, by the definition of $V_{K}(\Phi,\omega,\mu)$ and \eqref{eq_1norm_conv}, we get
\begin{align*}
\mu\|\omega\|_{L^1(W)}\leq\|\tilde{f}*\omega\|_{L^{1}(K)}\leq \mu_{2}^{\frac{q-1}{q}} \mu_{1}^{\frac{p-1}{p}} \|\omega\|_{L^{1}(W)}
\end{align*}
and so
\begin{align*}
&nm\|\omega\|_{L^1(W)}\left(\mu\mathcal{C}_{\rho,1}-\eta\right)\|f\|_{L^{p,q}(\widetilde{K})}\\
&\quad\leq\sum_{j=1}^{n}\sum_{k=1}^{m}|(f\ast\omega)(u_j,v_k)|\\
&\quad\quad\leq nm\|\omega\|_{L^1(W)}\left(\mathcal{C}_{\rho,2}\mu_{1}^{\frac{p-1}{p}}\mu_{2}^{\frac{q-1}{q}}+\eta\right)\|f\|_{L^{p,q}(\widetilde{K})}.
\end{align*}
Thus, the required sampling inequality holds with probability atleast
\begin{align*}
&Prob\left(\sup_{\substack{f\in V_{K}(\Phi,\omega,\mu)\\\|f\|_{L^{p,q}(\widetilde{K})}=1}}\left|\sum_{j=1}^{n}\sum_{k=1}^{m}Y_{j,k}(f)\right|\leq nm\eta\|\omega\|_{L^{1}(W)}\right)\\
%&\quad=1-Prob\left(\sup_{\substack{f\in V^{p,q}_{N,\psi}(\Phi,\mu,C_K)\\\|f\|_{L^{p,q}%(\mathbb{R}\times\mathbb{R}^d)}=1}}\left|\sum_{j=1}^{n}\sum_{k=1}^{m}Y_{j,k}(f)\right|%\geq nm\eta\|\psi\|_{L^{1,1}(C_K)}\right)\\
&\quad\geq 1-Prob\left(\sup_{f\in V^{\ast}_{K}(\Phi)}\left|\sum_{j=1}^{n}\sum_{k=1}^{m}Y_{j,k}(f)\right|\geq nm\eta\|\omega\|_{L^{1}(W)}\right).
\end{align*}
As $nm>\frac{108\sqrt{2}(\ln 2)d}{\eta}\left(2+\frac{162}{\eta}\right)$, it can be shown that
\begin{align*}
nm\eta\|\omega\|_{L^{1}(W)}&>108\sqrt{2}(\ln 2)d\left(1+\left(1+\frac{3nm}{2\sqrt{2}(\ln 2)d}\right)^{\frac{1}{2}}\right)\|\omega\|_{L^{1}(W)}
\end{align*}
and hence, by applying Lemma \ref{lemm3.3} with $\mathfrak{p}=nm\eta\|\omega\|_{L^{1}(W)}$, the probability can be estimated, thereby proving the result.
\hfill{$\Box$}
\end{proof}

\section{Reconstruction formulae} \label{ReconFor}
The reconstruction formulae for the functions in the space  $V_{K}(\Phi)$, under certain conditions, are discussed in the following two theorems.
%and $V_{K}(\Phi, \omega, \mu)$ shall be discussed in this section.

\begin{thm}\label{Recon1}
Let $ \omega, \Phi$ and $\rho$  satisfy the assumptions $(\text{A}_{1})- (\text{A}_{4})$ and $\{(u_{j}, v_{k})\}_{j,k \in \mathbb{N}}$  be a  sequence of i.i.d. random variables over $K$ with $\rho$ as the probability density function.
Suppose there exists a positive constant $\beta$ such that
\begin{equation}
\left\|\sum_{s,t} \textbf{c}(s,t)^{T} (\Phi * \omega)(\cdot - x_{s}, \cdot - y_{t})\right\|_{L^{p,q} (K)} \geq \beta\|\textbf{c}\|_{\ell^{p,q}},   \label{4eq1}
\end{equation}
 for all $\textbf{c} \in \left(\ell^{p,q}\right)^{r} .$
 Then, for any $\gamma \in (0,1)$ and $n,m\in {\mathbb{N}}$ satisfying \eqref{ThmNMvalue}, there exist  functions $ h_{j,k}$  such that
$$f(u, v)= \sum_{j=1}^{n} \sum_{k=1}^{m}(f*\omega)(u_{j}, v_{k}) h_{j,k}(u,v),\quad (u,v)\in \widetilde{K},$$  for all $f \in V_{K}(\Phi)$ with probability at least $1-\mathcal{A}_{1} e^{-nm \beta'_{\frac{\beta}{a_2},\gamma}} -\mathcal{A}_{2} e^{-nm \beta''_{\frac{\beta}{a_2},\gamma}},$
 where $ \mathcal{A}_{1},\mathcal{A}_{2},  \beta'_{\frac{\beta}{a_2},\gamma}$ and  $\beta''_{\frac{\beta}{a_2},\gamma}$ are  as in Theorem \ref{Sampl_Ineq_1}.
\end{thm}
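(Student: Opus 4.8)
The plan is to reduce the reconstruction problem to the sampling inequality of Theorem \ref{Sampl_Ineq_1} and then to invoke finite-dimensional frame theory to produce the functions $h_{j,k}$.

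First I would convert the stability hypothesis \eqref{4eq1} into the hypothesis of Theorem \ref{Sampl_Ineq_1}. Take any $f \in V_{K}^{*}(\Phi)$ and write $f = \sum_{s,t} \textbf{c}(s,t)^{T}\Phi(\cdot - x_{s}, \cdot - y_{t})$ on $\widetilde{K}$. Since $\operatorname{supp}(\omega) \subset W$ and $\widetilde{K} = K - W$, the values of $f \ast \omega$ on $K$ depend only on $f$ restricted to $\widetilde{K}$, so that $f \ast \omega = \sum_{s,t}\textbf{c}(s,t)^{T}(\Phi \ast \omega)(\cdot - x_{s}, \cdot - y_{t})$ on $K$. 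Hence, by \eqref{4eq1} and the lower bound in assumption $(\text{A}_{2})$,
\[
\|f \ast \omega\|_{L^{p,q}(K)} \;\geq\; \beta\|\textbf{c}\|_{\ell^{p,q}} \;\geq\; \frac{\beta}{a_{2}}\|f\|_{L^{p,q}(\widetilde{K})} \;=\; \frac{\beta}{a_{2}}.
\]
Consequently every normalized element of $V_{K}(\Phi)$ lies in $V^{p,q,*}_{K}(\Phi, \tfrac{\beta}{a_{2}})$; that is, $V_{K}^{*}(\Phi) = V^{p,q,*}_{K}(\Phi, \tfrac{\beta}{a_{2}})$.

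With this identification, I would apply the sampling inequality \eqref{eqnnew} obtained in the proof of Theorem \ref{Sampl_Ineq_1} with $\zeta = \tfrac{\beta}{a_{2}}$. Since the normalized functions of $V_{K}(\Phi)$ all belong to $V^{p,q,*}_{K}(\Phi, \tfrac{\beta}{a_{2}})$, a scaling argument gives, for every $f \in V_{K}(\Phi)$,
\[
\widetilde{A}_{\frac{\beta}{a_{2}},\gamma}\,\|f\|_{L^{p,q}(\widetilde{K})} \;\leq\; \left\|\{(f \ast \omega)(u_{j}, v_{k})\}_{\substack{j=1,\dots,n\\ k=1,\dots,m}}\right\|_{\ell^{p,q}} \;\leq\; \widetilde{B}_{\frac{\beta}{a_{2}},\gamma}\,\|f\|_{L^{p,q}(\widetilde{K})},
\]
and this holds with probability at least $1 - \mathcal{A}_{1}e^{-nm\beta'_{\beta/a_{2},\gamma}} - \mathcal{A}_{2}e^{-nm\beta''_{\beta/a_{2},\gamma}}$, exactly the probability asserted, provided $n,m$ satisfy \eqref{ThmNMvalue}, which is precisely \eqref{eq36} with $\zeta = \tfrac{\beta}{a_{2}}$.

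It remains to construct $h_{j,k}$. I would equip the finite-dimensional space $V_{K}(\Phi)$ with an inner product, say the one inherited from $L^{2}(\widetilde{K})$, turning it into a Hilbert space $H$ on which all norms are equivalent. Each evaluation $f \mapsto (f \ast \omega)(u_{j}, v_{k})$ is a bounded linear functional on $H$, so by the Riesz representation theorem there exist $g_{j,k} \in V_{K}(\Phi)$ with $(f \ast \omega)(u_{j}, v_{k}) = \langle f, g_{j,k}\rangle$. The lower bound in the sampling inequality, combined with norm equivalence, yields positive constants $A', B'$ such that $A'\|f\|_{H}^{2} \leq \sum_{j,k}|\langle f, g_{j,k}\rangle|^{2} \leq B'\|f\|_{H}^{2}$, so $\{g_{j,k}\}$ is a frame for $H$. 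Its frame operator $Sf = \sum_{j,k}\langle f, g_{j,k}\rangle g_{j,k}$ is invertible, and putting $h_{j,k} = S^{-1}g_{j,k}$ gives
\[
f = \sum_{j=1}^{n}\sum_{k=1}^{m}\langle f, g_{j,k}\rangle\,S^{-1}g_{j,k} = \sum_{j=1}^{n}\sum_{k=1}^{m}(f \ast \omega)(u_{j}, v_{k})\,h_{j,k} \quad \text{on } \widetilde{K}.
\]
I expect the main obstacle to be the first step: correctly using the support relation $\widetilde{K} = K - W$ to identify $f \ast \omega$ on $K$ with the sum of shifts of $\Phi \ast \omega$, so that hypothesis \eqref{4eq1} is genuinely applicable and the reduction to Theorem \ref{Sampl_Ineq_1} is legitimate. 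Once the two-sided sampling bound is secured, the frame reconstruction is routine finite-dimensional linear algebra and the probability estimate is inherited verbatim from Theorem \ref{Sampl_Ineq_1}.
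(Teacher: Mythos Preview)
Your reduction to the sampling inequality is exactly what the paper does: show that every normalized $f\in V_K(\Phi)$ satisfies $\|f\ast\omega\|_{L^{p,q}(K)}\ge \beta/a_2$ and then invoke Theorem~\ref{Sampl_Ineq_1} with $\zeta=\beta/a_2$ to obtain the two-sided bound with the stated probability. (One small slip: the inequality $\|\textbf{c}\|_{\ell^{p,q}}\ge \tfrac{1}{a_2}\|f\|_{L^{p,q}(\widetilde K)}$ comes from the \emph{upper} bound in $(\text{A}_2)$, not the lower one; your computation is correct, only the wording is off.)

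The only genuine difference is in how the functions $h_{j,k}$ are produced. The paper fixes coordinates: it writes $f=\sum_{i,s,t}c_i(s,t)\phi_i(\cdot-x_s,\cdot-y_t)$, encodes the samples as a matrix equation $M\textbf{c}=S$ with $M$ of size $nm\times rs_0t_0$, uses the lower sampling bound together with the lower inequality in $(\text{A}_2)$ to get $\|M\textbf{c}\|_{\ell^{p,q}}\ge \widetilde A_{\beta/a_2,\gamma}\,a_1\|\textbf{c}\|_{\ell^{p,q}}$, concludes $M$ is injective, and then takes the Moore--Penrose inverse $\textbf{c}=(M^*M)^{-1}M^*S$ to write down $h_{j,k}$ explicitly as linear combinations of the shifts $\phi_i(\cdot-x_s,\cdot-y_t)$. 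Your frame-theoretic route---Riesz representatives $g_{j,k}$, norm equivalence to pass from the $\ell^{p,q}$ sampling bound to an $\ell^2$ frame bound, and $h_{j,k}=S^{-1}g_{j,k}$---is the coordinate-free version of the same linear algebra (your frame operator $S$ is $M^*M$ in the generator basis). The paper's approach buys an explicit formula for $h_{j,k}$ in terms of the data $\phi_i,\omega,(u_j,v_k)$; yours is shorter and makes clear that only the lower sampling inequality is really needed for reconstruction.
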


\begin{proof}
Let $f \in V_{K}(\Phi).$ Then, without loss of generality, we may write $f$ as
\begin{equation}
f = \sum_{i=1}^{r}\sum_{s=1}^{s_0} \sum_{t=1}^{t_{0}} c_{i}(s, t) \phi_{i}(\cdot - x_{s}, \cdot - y_{t}) \text{ on } \widetilde{K}, \label{4eq2}
\end{equation}
where $s_{0}$ and $t_{0}$ are chosen, by appropriately reindexing, such that  $\Phi(\cdot - x_{s}, \cdot - y_{t})\not\equiv 0 \text{ on } \widetilde{K}$ for $1\le s\le s_{0},1\le t\le t_{0}$ and it is identically zero   on  $\widetilde{K}$ otherwise.
So,
\begin{equation}
(f*\omega)(u_{j}, v_{k}) = \sum_{i=1}^{r}\sum_{s=1}^{s_0} \sum_{t=1}^{t_{0}} c_{i}(s, t) (\phi_{i} * \omega )(u_{j}- x_{s}, v_{k}-y_{t})  \label{4eq3}
\end{equation}
for $ 1 \leq j \leq n $ and $ 1 \leq k \leq m .$
We shall  rewrite these equations  in terms of  matrices.
Let us consider $S_{jk} := (f * \omega)(u_{j}, v_{k}) \text{ for } 1 \leq j \leq n, 1 \leq k \leq m $ and the $r$-tuple  $$\ M_{jk}(s, t):= \big( m_{1, s, t}(u_{j}, v_{k}), m_{2, s, t}(u_{j}, v_{k}), \dots, m_{r, s, t}(u_{j}, v_{k}) \big ),$$  where
$ m_{i,s, t}(u_{j}, v_{k}) := (\phi_{i} * \omega)(u_{j}-x_{s}, v_{k} - y_{t}), \; 1 \leq i \leq r. $
Further, let
\begin{eqnarray*}
 M_{jk}(s)  &:=& \left(M_{jk}(s, 1), M_{jk}(s, 2),\ldots, M_{jk}(s, t_{0})  \right), \; \text{ for fixed } j,k,s\\
\mbox{and } M_{jk} &:=& \left(M_{jk}(1), M_{jk}(2),\ldots,  M_{jk}(s_{0}) \right),  \; \text{ for fixed } j,k.
\end{eqnarray*}
 Similarly, we assume
\begin{equation*}
    \begin{array}{rll}
        \textbf{c}(s,t) &:=& \big (c_{1}(s,t), c_{2}(s,t), \cdots, c_{r}(s,t) \big )^{T}, \\
       \textbf{c}(s)    &:=& \left(\textbf{c}(s,1), \textbf{c}(s,2),\ldots,\textbf{c}(s,t_{0})\right)^{T},  \\
      \textbf{c} & :=&\left(\textbf{c}(1), \textbf{c}(2), \ldots,\textbf{c}(s_{0})\right)^{T}.
    \end{array}
 \end{equation*}

 \noindent
Therefore,
\begin{eqnarray*}
S_{jk} &=& \sum_{i=1}^{r} \sum_{s=1}^{s_0} \sum_{t=1}^{t_{0}} c_{i}(s, t) m_{i, s, t}(u_{j}, v_{k})  \\
&=& \sum_{s=1}^{s_0} \sum_{t=1}^{t_{0}} M_{jk}( s, t)\textbf{c}(s,t)  \\
&=& \sum_{s=1}^{s_0} M_{jk}(s)\textbf{c}(s)  \\
&=& M_{jk}\textbf{c}  \\
&=& (M \textbf{c})_{jk}.
\end{eqnarray*}
%where the matrix $M_{jk}$ is of order $nm \times s_{0}.$
  Thus, the system of equations \eqref{4eq3} can be represented as
\begin{equation}
M\textbf{c}=S, \label{4eq4}
\end{equation}
where $M,\textbf{c}$ and $S$ are   $mn\times rs_{0}t_{0},$ $ rs_{0}t_{0}\times 1$ and $mn\times 1$ matrices respectively.
\par In order to prove the injectivity of $M,$ we consider
$\widetilde{f}:=\frac{\alpha f}{\|f\|_{L^{p,q}(\widetilde{K})}}.$ Then, using \eqref{4eq1} and \eqref{asmptn1}, we obtain
%By Theorem \ref{thm3.5} and  using \eqref{4eq1}, we get
 \begin{equation*}
\|\widetilde{f}*\omega \|_{L^{p,q}(K)} \geq \beta \|\widetilde{\textbf{c}}\|_{\ell^{p,q}} \geq \tfrac{\beta}{a_{2}} \|\widetilde{f}\|_{L^{p,q}(\widetilde{K})} = \frac{\alpha\beta}{a_{2}},
\end{equation*}
where $\widetilde{\textbf{c}}$ is the coefficient vector corresponding to $\widetilde{f}.$ Thus,  $\widetilde{f} \in V_{K,\alpha}^{p,q}(\Phi,  \tfrac{\alpha\beta}{a_{2}}).$
 Now, by Theorem \ref{Sampl_Ineq_1}, the inequality
\begin{equation}
 \widetilde{A}_{\frac{\beta}{a_{2}}, \gamma} \|f\|_{L^{p,q}(\widetilde{K})}  \leq \bigg \| \left \{(f*\omega)(u_{j}, v_{k}) \right \}_{\substack{j=1,2,\dots, n;\\ k=1,2,\dots, m}} \bigg \|_{\ell^{p,q}} \label{eq4new1}
\end{equation}
holds with probability atleast $1-\mathcal{A}_{1} e^{-nm \beta'_{\frac{\beta}{a_{2}},\gamma}} -\mathcal{A}_{2} e^{-nm \beta''_{\frac{\beta}{a_{2}},\gamma}}$.  Making use of the matrix equation $M\textbf{c}=S$  in  \eqref{eq4new1}, we have
\begin{equation*}
\|M \textbf{c}\|_{\ell^{p,q}} \geq \widetilde{A}_{\frac{\beta}{a_{2}}, \gamma} \|f\|_{L^{p,q}(\widetilde{K})} \geq \widetilde{A}_{\frac{\beta}{a_{2}}, \gamma} a_{1} \|\textbf{c}\|_{\ell^{p,q}},
\end{equation*}
by  \eqref{asmptn1}. As the above inequality holds for all $\textbf{c} \in {\mathbb{C}}^{rs_{0}t_{0}},$ it follows that $M$ is injective.
\par
Further, let $M^{*}$ be the conjugate transpose of $M.$ Then,  the operator $M^{*}M$ is also injective and therefore invertible. Using \eqref{4eq4} and the invertibility of $M^{*}M$, we get  $\textbf{c} = (M^{*} M)^{-1}M^{*}S = (\widetilde{M})^{T}S,$
  where $\widetilde{M}$ denotes the matrix $\overline{M}(M^{T} \overline{M})^{-1}.$
 We let   $\widetilde{M}_{jk}, \widetilde{M}_{jk}(s)$ and $\widetilde{M}_{jk}(s,t)$ denote  the tuples associated with $\widetilde{M},$ similar to  those of $M.$  Also,  we take  $\left( \widetilde{M}_{jk}(s,t) \right)_{i}$ to represent the $i^{th}$ coordinate of  $\widetilde{M}_{jk}(s,t).$ Thus,  the unknown coefficients can be written as
 %\begin{eqnarray*}
 % \textbf{c}(s) &=& \sum_{j,k} \widetilde{M}_{jk}(s)S_{jk} \; \text{ for } x_{s} \in X \\
%\text{and } \quad \textbf{c}(s,t) &=& \displaystyle\sum_{j,k} \widetilde{M}_{jk}(s,t)S_{jk} \; \text{ for } y_{t} \in Y,
 %\end{eqnarray*}
% where
$$\text{c}_{i}(s,t)= \sum_{j,k} \big ( \widetilde{M}_{jk}(s,t) \big )_{i}S_{jk}, \mbox{ for } 1\le i\le r, 1\le s\le s_{0}  \mbox{ and  } 1\le t\le t_{0}.$$
%denotes the $i^{th}$ coordinate.
Therefore, \eqref{4eq2} leads to
\begin{eqnarray*}
f (u,v)&=& \sum_{j,k} S_{jk}  \sum_{s=1}^{s_{0}}  \sum_{t=1}^{t_{0}} \sum_{i=1}^{r} \big ( \widetilde{M}_{jk}(s,t) \big )_{i}  \phi_{i}(u - x_{s}, v - y_{t})\\
&=&\sum_{j,k}(f*\omega)(u_{j}, v_{k})h_{j,k}(u,v), \ \ (u,v) \in \widetilde{K},
\end{eqnarray*}
where $\displaystyle{h_{j,k}:= \sum_{s=1}^{s_{0}}  \sum_{t=1}^{t_{0}} \sum_{i=1}^{r} \big ( \widetilde{M}_{jk}(s,t) \big )_{i}  \phi_{i}(\cdot - x_{s}, \cdot - y_{t}).}$  Clearly, this reconstruction formula holds with probability
 at least $1-\mathcal{A}_{1} e^{-nm \beta'_{\frac{\beta}{a_2},\gamma}} -\mathcal{A}_{2} e^{-nm \beta''_{\frac{\beta}{a_2},\gamma}},$ thereby proving the result.
 \hfill{$\Box$}
\end{proof}

\begin{thm}
Let $ \omega, \Phi, \rho$ and   $\{(u_{j}, v_{k})\}_{j,k \in \mathbb{N}}$ be  as in Theorem \ref{Recon1}. Suppose there exists a constant $0<\widetilde{\mu}\leq 1,$ such that $\widetilde{\mu}\|\omega\|_{L^1(W)}\|f\|_{L^{p,q}(\widetilde{K})}\leq\|f\ast\omega\|_{L^{1}(K)},$ for all $f\in V_K(\Phi).$ Then, for $ 0<\eta<\widetilde{\mu}\mathcal{C}_{\rho,1},$ and $n,m\in\mathbb{N}$ such that $nm>\tfrac{108\sqrt{2}(\ln 2)d}{\eta}\left(2+\frac{162}{\eta}\right)$, there exists a sequence of functions $\{h_{j,k}:1\leq j\leq n,\,1\leq k\leq m\}$ such that every $f\in V_K(\Phi)$ can be reconstructed from its average sample values as
$$f(u, v)= \sum_{j=1}^{n} \sum_{k=1}^{m}(f*\omega)(u_{j}, v_{k}) h_{j,k}(u,v),\quad (u,v)\in \widetilde{K},$$
with probability atleast
\[1-\mathcal{A}_1exp\left(-nm\frac{3a_1^2\eta^2}{4\widetilde{C}_{\Phi}(6\widetilde{C}_{\Phi}+\eta a_1)}\right)-\mathcal{A}_2exp\left(-nm\frac{\eta^2}{72\sqrt{2}(81+\eta)}\right),\]
where $\mathcal{A}_1, \mathcal{A}_2$ and $\widetilde{C}_{\Phi}$ are as in Lemma \ref{lemm3.3}.
\end{thm}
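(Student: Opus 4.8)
The plan is to follow the argument of Theorem \ref{Recon1} almost verbatim, replacing the role played there by the sampling inequality of Theorem \ref{Sampl_Ineq_1} (together with the auxiliary hypothesis \eqref{4eq1}) with the sampling inequality of Theorem \ref{Sampl_Ineq_2}. The crucial observation is that the hypothesis $\widetilde{\mu}\|\omega\|_{L^1(W)}\|f\|_{L^{p,q}(\widetilde{K})}\leq\|f\ast\omega\|_{L^1(K)}$ for all $f\in V_K(\Phi)$ says precisely that $V_K(\Phi)=V_K(\Phi,\omega,\widetilde{\mu})$. Consequently, under the stated conditions on $\eta$ and $nm$, the conclusion of Theorem \ref{Sampl_Ineq_2} (with $\mu=\widetilde{\mu}$) furnishes, with the asserted probability, a genuine sampling inequality valid for \emph{every} $f\in V_K(\Phi)$ simultaneously, and this is what drives the reconstruction.

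First I would fix $f\in V_K(\Phi)$ and write it as $f=\sum_{i=1}^r\sum_{s=1}^{s_0}\sum_{t=1}^{t_0}c_i(s,t)\phi_i(\cdot-x_s,\cdot-y_t)$ on $\widetilde{K}$, after reindexing so that only finitely many translates survive, exactly as in \eqref{4eq2}. Convolving with $\omega$ and evaluating at the sample points $(u_j,v_k)$ produces the linear system $M\textbf{c}=S$, where $S_{jk}=(f\ast\omega)(u_j,v_k)$ and $M$ is assembled from the tuples $M_{jk}(s,t)$ whose entries are $(\phi_i\ast\omega)(u_j-x_s,v_k-y_t)$, using the same block notation as in the proof of Theorem \ref{Recon1}. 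No part of this algebraic set-up changes.

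The heart of the proof is the injectivity of $M$, and here I would invoke the lower bound from Theorem \ref{Sampl_Ineq_2}. Since $0<\eta<\widetilde{\mu}\mathcal{C}_{\rho,1}$, the factor $nm\|\omega\|_{L^1(W)}(\widetilde{\mu}\mathcal{C}_{\rho,1}-\eta)$ is strictly positive; hence, on the event on which that inequality holds uniformly, $S=0$ forces $\|f\|_{L^{p,q}(\widetilde{K})}=0$, so $f\equiv 0$ on $\widetilde{K}$, and then \eqref{asmptn1} gives $\textbf{c}=0$. Thus $M$ is injective on $\mathbb{C}^{rs_0t_0}$, the operator $M^{\ast}M$ is invertible, and one recovers $\textbf{c}=(M^{\ast}M)^{-1}M^{\ast}S=(\widetilde{M})^{T}S$ with $\widetilde{M}=\overline{M}(M^{T}\overline{M})^{-1}$. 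Substituting these coefficients back into the expansion of $f$ yields the reconstruction formula with $h_{j,k}=\sum_{s=1}^{s_0}\sum_{t=1}^{t_0}\sum_{i=1}^{r}\big(\widetilde{M}_{jk}(s,t)\big)_i\,\phi_i(\cdot-x_s,\cdot-y_t)$, identical in form to Theorem \ref{Recon1}.

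Finally, the probability estimate is inherited directly: the reconstruction is valid precisely on the event on which the sampling inequality of Theorem \ref{Sampl_Ineq_2} holds, whose probability is at least the stated expression involving $\mathcal{A}_1$ and $\mathcal{A}_2$. I do not expect a genuine obstacle here, since the analytic work has already been absorbed into Theorems \ref{Sampl_Ineq_2} and \ref{Recon1}; the only point requiring care is to note that the given threshold $nm>\tfrac{108\sqrt{2}(\ln 2)d}{\eta}\big(2+\tfrac{162}{\eta}\big)$ is exactly the hypothesis of Theorem \ref{Sampl_Ineq_2}, so that Lemma \ref{lemm3.3} applies with $\mathfrak{p}=nm\eta\|\omega\|_{L^1(W)}$ and the two exponential terms assume the displayed form.
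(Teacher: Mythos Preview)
Your proposal is correct and follows essentially the same approach as the paper: both observe that the hypothesis means $V_K(\Phi)=V_K(\Phi,\omega,\widetilde{\mu})$, apply Theorem \ref{Sampl_Ineq_2} to obtain a uniform lower sampling bound, deduce injectivity of the matrix $M$ (the paper phrases this as $a'_1\|\textbf{c}\|_{\ell^{p,q}}\le\|M\textbf{c}\|_{\ell^1}$ while you argue $S=0\Rightarrow f=0\Rightarrow\textbf{c}=0$, which is equivalent), and then finish via the pseudoinverse exactly as in Theorem \ref{Recon1}. The probability estimate is inherited identically in both arguments.
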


\begin{proof}
Let $f\in V_K(\Phi).$ Then, $f$ may be written as in \eqref{4eq2} and from \eqref{asmptn1} we have
$$ a_{1}\|\textbf{c}\|_{\ell^{p,q}} \leq \|f\|_{L^{p,q}(\widetilde{K})} \leq a_{2} \|\textbf{c}\|_{\ell^{p,q}}.$$ Also $f\in V_{K}(\Phi,\omega,\widetilde{\mu})$, by hypothesis. Therefore, applying Theorem \ref{Sampl_Ineq_2} and the above inequality, we obtain
\begin{align*}
&nm\|\omega\|_{L^1(W)}\left(\widetilde{\mu}\mathcal{C}_{\rho,1}-\eta\right)a_{1}\|\textbf{c}\|_{\ell^{p,q}}\\
&\quad\leq\sum_{j=1}^{n}\sum_{k=1}^{m}|(f\ast\omega)(u_j,v_k)|\\
&\quad\quad\leq nm\|\omega\|_{L^1(W)}\left(\mathcal{C}_{\rho,2}\mu_{1}^{\frac{p-1}{p}}\mu_{2}^{\frac{q-1}{q}}+\eta\right)a_{2} \|\textbf{c}\|_{\ell^{p,q}},
\end{align*}
which is probabilistic. Making use of the matrices $M, \textbf{c}$ and $S$, defined as in the proof of Theorem \ref{Recon1}, we get $a'_1 \|\textbf{c}\|_{\ell^{p,q}}\leq \|M\textbf{c}\|_{\ell^1}\leq a'_2 \|\textbf{c}\|_{\ell^{p,q}}$ for some positive constants $a'_1$ and $a'_2$. In fact, this is true for all such $\textbf{c}$, thereby rendering $M$ injective. The proof now continues as in that of Theorem \ref{Recon1}, but with the probability as in Theorem \ref{Sampl_Ineq_2}.
\hfill{$\Box$}
\end{proof}

\vskip 1 em
\noindent {\bf Acknowledgment:}
The second author S. Arati acknowledges the financial support of National Board for Higher Mathematics, Department of Atomic Energy(Government of India). The third author P. Devaraj acknowledges the financial support of the Department of Science and Technology(Government of India) under the research grant DST-SERB Research Grant MTR/2018/000559.

% ------------------------------------------------------------------------
\end{document}